\tikzstyle{vertex}=[circle, draw, inner sep=0pt, minimum size=6pt]
\newcommand{\vertex}{\node[vertex]}
\newtheorem{thm}{Theorem}[section]
\newtheorem{prp}[thm]{Proposition}
\newtheorem{lm}[thm]{Lemma}
\theoremstyle{definition}
\newtheorem{crl}[thm]{Corollary}
\theoremstyle{remark}
\newtheorem{rmk}[thm]{Remark}
\newcommand{\sgn}{\mathrm{sgn}}
\newcommand{\gin}{\mathrm{in}}
\newcommand{\gout}{\mathrm{out}}
\newcommand{\sudda}[1]{}
\newcommand{\dx}{\partial}
\title{Path decompositions of digraphs and their applications to Weyl algebra}
\author{Askar Dzhumadil'daev, Damir Yeliussizov}
\date{}
\address{Kazakh-British Technical University, 59 Tole bi St, 050000, Almaty, Kazakhstan}
\email{dzhuma@hotmail.com, yeldamir@gmail.com} 
\subjclass[2010]{Primary 05A18, 05A30, 11B73, Secondary 05C45, 16R10} 
\keywords{Decompositions of graphs, differential operators, Weyl algebra, partitions of sets, Stirling numbers, polynomial identities, $N$-commutators}
\begin{document}

\begin{abstract}
We consider decompositions of digraphs into edge-disjoint paths and describe their connection with the $n$-th Weyl algebra of differential operators. This approach gives a graph-theoretic combinatorial view of the normal ordering problem and helps to study skew-symmetric polynomials on certain subspaces of Weyl algebra. For instance, path decompositions can be used to study minimal polynomial identities on Weyl algebra, similar as Eulerian tours applicable for Amitsur--Levitzki theorem. We introduce the $G$-Stirling functions which enumerate decompositions by sources (and sinks) of paths.  
\end{abstract}

\maketitle

\section{Introduction}
Let $G$ be a digraph with possible multiple edges and loops. Suppose that edges of $G$ are labeled by distinct indices. We consider {\it decompositions of $G$ into edge-disjoint increasing paths}. This means that we partition the edge set into paths so that edge labels increase along every path. Paths that we consider are directed and not simple in general, i.e. they may contain cycles, but no repetition of any edge is allowed. Let us say that such decompositions are {\it principal}. Note that if~$G$~is decomposed into one path, it is clearly an Euler tour. If $G$ has one vertex $1$ and $m$ labeled loops $(1,1)$, then principal decompositions correspond to partitions of set $[m] := \{1, \ldots, m \}$.  

In our paper we consider applications of this combinatorial setting related to Weyl algebra. The main idea behind our results is connection of graph decompositions with differential operators. We introduce the {\it $G$-Stirling function} which counts decompositions by sources (and sinks) of paths and it is defined as follows:
$$
S_G(I) := \text{ the number of principal decompositions of $G$ with multiset of sources $I$.}
$$
If $G$ has one vertex, then $S_G(I)$ becomes Stirling number of the second kind $S(m,k)$, where $|I| = k$ and $G$ has $m$ labeled loops $(1,1)$. So, $S_G(I)$ is a path partition version of the classical Stirling numbers (of second kind). 

We obtain that coefficients in normal ordering composition of the $n$-th Weyl algebra $A_n$ generated by $x_1, \ldots, x_n, \dx_1, \ldots, \dx_n$ enumerate principal digraph decompositions with prescribed sets of sources and sinks. Related coefficients are the values of $G$-Stirling function. For example, the typical formula in our interpretation is 
$$
\prod_{\ell = 1}^m x_{i_{\ell}} \dx_{j_{\ell}} = \sum_{I} S_{G}(I) \prod_{i \in I} x_j \prod_{j \in J} \dx_j,
$$
where $(i_{\ell}, j_{\ell})$ is an edge of $G$ with the label $\ell$, sum runs over all multisets of sources $I$, and $J$ is a multiset of sinks (which is determined uniquely from the given $I$). 
This fact (in its general form, Theorem \ref{main}) gives a graph-theoretic combinatorial interpretation to the normal ordering problem, including the case $n = 1,$ which was studied well (e.g. \cite{blasiak1, blasiak, desouky2, katriel, mansour, mansour2, boson, varvak}). Apparently for $n = 1$, our interpretations are similar with graph combinatorial models studied in \cite{blasiak1}. 

Consider the skew-symmetric polynomials $s_m$ as $m$-ary operations on Weyl algebra
$$
s_m(X_1, \ldots, X_m) = \sum_{\sigma \in S_m} {\sgn}(\sigma) X_{\sigma(1)} \cdots X_{\sigma(m)}.
$$
We are interested in a question whether $s_m=0$ is an {\it identity} on a certain subspace  
$W$ of Weyl algebra ($W \subset A_n$), or whether it is an {\it  
$m$-commutator}, i.e. that
$s_m(X_1,\ldots,X_m)\in W$ for all $X_1,\ldots,X_m\in W.$ 

Weyl algebra has no polynomial identities except associativity (Corollary~5 of Theorem~1 in \cite{dzhum}). So, to explore possible nontrivial identities or commutators, one has to restrict the class to smaller subspaces. For example, a classical result due to  Lie, Jacobi, Poisson, is that the space 
$$
A_n^{(-,1)} := \langle u \dx_i\ |\ u \in K[x_1, \ldots, x_n], i=1,\ldots,n \rangle 
$$
can be identified as a space of vector fields $\text{Vect}(n)$ and it has a $2$-commutator,
$$[X,Y] = XY - YX \in A_n^{(-,1)}$$ 
for all $X, Y \in A_n^{(-,1)}$. In \cite{dzhum1} it was proved that $A_n^{(-,1)}$ has nontrivial $N$-commutator for $N = n^2 + 2n - 2$ and that $s_{N+2} = 0$ is identity. 

Note that the space $A_n^{(-,1)}$ can be endowed by a {\it left-symmetric multiplication}
$$
u \dx_i \circ v \dx_j = u\dx_i(v) \dx_j.
$$
Under this multiplication $A_n^{(-,1)}$ becomes a {\it left-symmetric algebra}, i.e. it satisfies the following identity
$$
(X, Y, Z) = (Y,X, Z), \text{ where } (X, Y, Z) = X \circ (Y \circ Z) - (X \circ Y) \circ Z.
$$
Left-symmetric algebras appear in  differential geometry and physics and they are known by many other names: Vinberg algebras, pre-Lie algebras, right-symmetric algebras, etc. 

In \cite{dzhum1} it was proved that the $N$-commutator $s_N$ is a well-defined operation not only under the associative multiplication, which says that for all $X_1,\ldots,X_N\in A_n^{(-,1)}$
$$s_N(X_1,\ldots,X_N)=
\sum_{\sigma\in S_n} X_{\sigma(1)}\cdot (\cdots (X_{\sigma(N-1)}\cdot X_{\sigma(N)})\cdots)
\in A_n^{(-,1)},$$
but it can also be presented as an $N$-commutator under the left-symmetric multiplication 
$$
 s_N^\circ(X_1,\ldots,X_N)=\sum_{\sigma\in S_n} X_{\sigma(1)}\circ (\cdots (X_{\sigma(N-1)}\circ X_{\sigma(N)})\cdots).$$

The next natural subspace of Weyl algebra is 
$$A_n^{(1,1)}:=\langle x_i\dx_j\ |\ 1\le i,j \le n \rangle.$$ 
Note that the space $A_n^{(1,1)}$ generates a subalgebra of $A_n^{(-,1)}$ as a {\it left-symmetric} algebra, 
$$X=x_i\dx_j,Y=x_s\dx_k\in A_n^{(1,1)}\Rightarrow X\circ Y=\delta_{j,s}x_i\dx_k\in A_n^{(1,1)},$$
but under the {\it associative} multiplication it is not closed,
$$X=x_i\dx_j,Y=x_s\dx_k\in A_n^{(1,1)}\Rightarrow X\circ Y=\delta_{j,s}x_i\dx_k+x_ix_s\dx_j\dx_k\notin A_n^{(1,1)}.$$
The famous Amitsur-Levitzki theorem \cite{al} states that 
 $s_{2n}=0$ is an identity of the {\it left-symmetric} algebra $A_n^{(1,1)}.$
 In \cite{dzhum2} it was proved that this identity can be prolonged to the identity of the whole left-symmetric algebra $A_n^{(-,1)}.$ 
 
Now a natural question arises about identities of $A_n^{(1,1)}$ as a subspace of the {\it associative} Weyl algebra. Numerical evidence shows that for $n = 1,2,3$ it behaves like Amitsur--Levitzki identity, i.e. $s_2 = 0, s_4 = 0, s_6 = 0$, respectively, are minimal polynomial identities on $A_n^{(1,1)}$. However, it turns out that this case is more complicated: $s_8$ is not an identity for $n = 4$. 

We study this problem using graph-theoretic approach. It is known that Amitsur--Levitzki theorem can be proved using Euler tours in digraphs \cite{bollobas, szigeti, swan} (or decompositions into one path in our case).  In fact, the normal ordering (or expansion) of polynomials $s_m$ has coefficients related to path decompositions (in some sense, generalized Euler tours). For instance, the coefficient at the first order term $x_{i}\dx_{j}$ in $s_{2n}$ is $0$, which reflects Amitsur--Levitzki theorem; it corresponds to the usual Euler tours. The next order coefficients ($x_{i_1} x_{i_2} \dx_{j_1} \dx_{j_2}$, etc.) index decompositions into two or more paths. Using this graph based scheme, we prove that $s_{2n}$ is not an identity on $A_n^{(1,1)}$ for $n > 3$. Note that the problem of finding the minimal polynomial identity on $A_n^{(1,1)}$ remains open, i.e. to find a minimal $c = c(n)$ for which $s_c= 0$ is identity. We  know its existence and the following bound: $2n < c \le n^2$ (for $n > 3$).

We also apply this technique to study the $N$-commutators on Weyl algebra. As we mentioned above, a space of differential operators of first order $A_{n}^{(-,1)}$  has a nontrivial $N$-commutator for $N=n^2+2n-2$ \cite{dzhum1} and a space of differential operators with one variable ($n = 1$) of order $p$  admits a nontrivial $N$-commutator for $N=2p$ \cite{dzhum}. In all these cases, $s_{N+1}=0$ is an identity. One can expect that this is a general situation: if $s_m=0$ is a minimal identity then in the pre-identity case $s_{m-1}$ gives a nontrivial $N$-commutator for $N=m-1.$  
Example of $A_n^{(1,1)}$ shows that this conjecture is not true. We prove that if an $N$-commutator on $A_n^{(1,1)}$ is nontrivial, then $N = 2.$

\section{Principal decompositions and $G$-Stirling functions} \label{std} 
We call decomposition of a digraph $G = (V, E)$ into $k$ edge-disjoint paths by $k$-decomposition. Let us suppose that edges of $G$ are {\it labeled} by $m$ indices, $E = \{e_1, \ldots, e_m \}$. We say that the {$k$-decomposition $E = P_1 \cup \ldots \cup P_{k}$} is {\it principal} if for every path $P_{i} = e_{\ell_1} \ldots e_{\ell_s}$ ($1 \le i \le k$) we have $\ell_1 < \ldots < \ell_s$. In other words, we decompose 
the edge set into several paths and the indices of edges {\it increase} along every path.  
For example, the graph $G_1$ with $E = \{(1,2), (2,1), (4,2), (1,4), (2,5), (4,3) \}$ and $V = \{1,2,3,4 \}$ has a principal $3$-decomposition ${e_1 e_2 e_4 \cup e_3 e_5 \cup e_6}$ (see Fig. 1 (a), (c)); ${e_1e_2 \cup e_4e_6 \cup e_3e_5}$ is also a principal decomposition, whereas ${e_1e_2 \cup e_4e_3 \cup e_5e_6}$ is not.

When $V = \{ 1\}$ and graph has $m$ labeled loop edges $(1, 1)$, the principal decompositions correspond to partitions of set $[m]$ into disjoint subsets. Further, we suppose that the digraph $G$ is presented by the vertex set $V = [n]$.

A {\it block} (or {\it $p$-block} if $p$ is specified) of a graph is a distinguished set of edges 
$\{e_1, \ldots, e_p \}.$ If graph is built up from several (disjoint) blocks, then we require that the edges of each block {must lie} in {\it distinct} paths. For example, the digraph $G_1'$ (see Fig.~1 (b)) built from three blocks $B_1 = \{e_1,e_2 \},$ $B_2 = \{e_3 \}$, $B_3 = \{e_4, e_5, e_6 \}$, has a principal 4-decomposition $e_1 e_5 \cup e_2 e_4 \cup e_3 \cup e_6$ (see Fig.~1 (d)). Note that a principal 3-decomposition of $G_1$, ${e_1 e_2 e_4 \cup e_3 e_5 \cup e_6}$, cannot be used for $G_1'$ since $e_1, e_2$ are from one block $B_1$ and thus cannot be in the same path.

\begin{center}
\[\begin{tikzpicture}[x=0.7cm, y=0.7cm,>=latex, thick]
	\vertex (1) at (0,2) [label=above left:$1$]{};
	\vertex (2) at (2.5,2) [label=above right:$2$]{};	
	\vertex (3) at (2.5,0) [label=below right:$3$]{};
	\vertex (4) at (0,0) [label=below left:$4$]{};
	
	\draw[->] (1) edge[bend left=20] node[above]{$e_1$} (2);
	\draw[->] (2) edge[bend left=20] node[below]{$e_2$} (1);
	\draw[->] (4) edge node[left]{$e_3$} (2);
	\draw[->] (1) edge node[left]{$e_4$} (4);
	\draw[->] (2) edge node[right]{$e_5$} (3);
	\draw[->] (4) edge node[below]{$e_6$} (3);

	\vertex (v1) at (6,2) [label=above left:$1$]{};
	\vertex (v2) at (8.5,2) [label=above right:$2$]{};	
	\vertex (v3) at (8.5,0) [label=below right:$3$]{};
	\vertex (v4) at (6,0) [label=below left:$4$]{};
	
	\draw[->] (v1) edge[red, bend left=20] node[black, above]{$e_1$} (v2);
	\draw[->] (v2) edge[red, bend left=20] node[black, below]{$e_2$} (v1);
	\draw[->] (v4) edge [black] node[black, left]{$e_3$} (v2);
	\draw[->] (v1) edge [blue] node[black, left]{$e_4$} (v4);
	\draw[->] (v2) edge [blue] node[black, right]{$e_5$} (v3);
	\draw[->] (v4) edge [blue] node[black, below]{$e_6$} (v3);
\end{tikzpicture}\]

\begin{tabular}{cc}
(a) Graph $G_1$ without blocks & (b) Graph $G_1'$ with edges\\
						 &  divided into three blocks\\
						 &  $B_1 = \{e_1,e_2 \}$ (red), \\
						 &  $B_2 = \{e_3 \}$ (black)\\
						 & $B_3 = \{e_4, e_5, e_6 \}$ (blue)
\end{tabular}

\[\begin{tikzpicture}[x=0.7cm, y=0.7cm,>=latex, thick]
	\vertex[fill] (p1) at (5.0,2) [label=above left:$1$]{};
	\vertex (p2) at (7.5,2) [label=above right:$2$]{};	
	\vertex[fill] (p4) at (5.0,0) [label=below left:$4$]{};
	\draw[->] (p1) edge[bend left=20] node[above]{$e_1$} (p2);
	\draw[->] (p2) edge[bend left=20] node[below]{$e_2$} (p1);
	\draw[->] (p1) edge node[left]{$e_4$} (p4);

	\vertex (pp2) at (11.5,2) [label=above right:$2$]{};	
	\vertex[fill] (pp3) at (11.5,0) [label=below right:$3$]{};
	\vertex[fill] (pp4) at (9.0,0) [label=below left:$4$]{};
	\draw[->] (pp4) edge node[left]{$e_3$} (pp2);
	\draw[->] (pp2) edge node[right]{$e_5$} (pp3);

	\vertex[fill] (ppp3) at (15.5,0) [label=below right:$3$]{};
	\vertex[fill] (ppp4) at (13.0,0) [label=below left:$4$]{};
	\draw[->] (ppp4) edge node[below]{$e_6$} (ppp3);
\end{tikzpicture}\]

(c) Graph $G_1$ is decomposed into three paths $e_1 e_2 e_4,$  $e_3 e_5$ and $e_6$. 

(Sources and sinks are shown black.)

\[\begin{tikzpicture}[x=0.7cm, y=0.7cm,>=latex, thick]

	\vertex [fill](vv1) at (6,2) [label=above left:$1$]{};
	\vertex (vv2) at (8.5,2) [label=above right:$2$]{};	
	\vertex [fill](vv3) at (8.5,0) [label=below right:$3$]{};
		
	\draw[->] (vv1) edge[red, bend left=20] node[black, above]{$e_1$} (vv2);
	\draw[->] (vv2) edge [blue] node[black, right]{$e_5$} (vv3);
	
	
	\vertex (vvv1) at (11,2) [label=above left:$1$]{};
	\vertex [fill](vvv2) at (13.5,2) [label=above right:$2$]{};	
	\vertex [fill](vvv4) at (11,0) [label=below left:$4$]{};

	\draw[->] (vvv2) edge[red, bend left=20] node[black, below]{$e_2$} (vvv1);
	\draw[->] (vvv1) edge [blue] node[black, left]{$e_4$} (vvv4);

	\vertex [fill](vvvv2) at (17.5,2) [label=above right:$2$]{};	
	\vertex [fill](vvvv4) at (15,0) [label=below left:$4$]{};
	\draw[->] (vvvv4) edge [black] node[black, left]{$e_3$} (vvvv2);
	

	\vertex [fill](vvvvv3) at (21.5,0) [label=below right:$3$]{};
	\vertex [fill](vvvvv4) at (19,0) [label=below left:$4$]{};
	\draw[->] (vvvvv4) edge [blue] node[black, below]{$e_6$} (vvvvv3);
\end{tikzpicture}\]

(d) Graph $G_1'$ is decomposed into four paths $e_1 e_5,$  $e_2 e_4$, $e_3$ and $e_6$. 

\

{\bf Fig. 1.} Examples of digraphs and principal decompositions.  
\end{center}

\

We will use the following notation for multisets: 
$A - X$ is a difference eliminating from $A$ as many copies of elements as $X$ has, e.g. $\{1^3,2^2,3,4^3 \} - \{1^2,2,4 \} = \{1,2,3,4^2 \}$; $A \uplus X$ is a merge of multisets, e.g. $\{1^2,2,4^2 \} \uplus \{1,2^2,3 \} = \{1^3,2^3,3,4^2 \}$. We also write $G - e$ if edge $e$ is eliminated from $G$ or $G - B$ if block $B$ is removed. 

For a given digraph $G$, let ${\gin(i)}, {\gout(i)}$ ($i \in V$) denote the number incoming and outcoming edges, respectively;
$$
V_{\gout} := \{1^{\gout(1)}, \ldots, n^{\gout(n)} \}, \quad V_{\gin}:=\{1^{\gin(1)}, \ldots, n^{\gin(n)} \},
$$
$$
M_{\gout}(G) := \{I\ | \ I \subseteq V_{\gout} \}, 
$$
i.e., $M_{\gout}$ is the set of all sub(multi)sets of $V_{\gout}.$ 

Note that if for a $k$-decomposition, we have the sources $I$, then the corresponding sinks $J = V_{\gin} \uplus I - V_{\gout}$ are determined uniquely. For example, in Fig. 1 (c) we have $I = \{ 1,4,4\}$ and $J = \{3,3,4 \}$. (Further, for any sources $I$ we will just write sinks as $J$ meaning that $J = V_{\gin} \uplus I - V_{\gout}$.)

Define {\it the $G$-Stirling function} 
$$
S_G : M_{\gout}(G) \to \mathbb{Z}_{\ge 0}
$$
as follows
$$
S_G(I) := {\text{ the number of principal decompositions of $G$}} \atop {\text{\quad\quad\quad with sources $I$ (and sinks $J$).}} 
$$

\

If $n = 1,$ then $S_G(I)$ corresponds to Stirling number of the second kind $S(m,k)$ where $|I| = k$ and digraph $G$ has $m$ labeled loops $(1,1)$. 

\begin{thm}\label{sg}
The $G$-Stirling function $S_G$ satisfies the following properties:
\begin{itemize}
\item[(i)] $S_G(V_{\gout}) = 1$; 
\item[(ii)] if $S_G(I) > 0$ for some $I \subset V_{\gout}$, then for any $I'$, such that $V_{\gout} \supseteq I' \supset I$, we have $S_G(I') > 0$;
\item[(iii)] Suppose that digraph $G$ is built up from blocks $B_1, \ldots, B_m$ so that the indices of edges increase with respect to the order of blocks. Let $e=(i,j) \in B_m$, $G' = G - e,$ $I' = I - \{ i\}$. Let $k_i$ be the number of repetitions of $i$ in $(J - \{ j\}) \uplus \{i \}$ and $r_{e}$ be the number of edges in $B_m - e$ that end by $i$. 
Then the following recurrence relation holds for $S_G(I)$.
\begin{align} \label{recmain}
S_{G}(I) &=S_{G'} (I') + (k_i - r_e) S_{G'}(I).
\end{align}

\end{itemize}
\end{thm}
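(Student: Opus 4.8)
The plan is to establish the three properties separately, with the recurrence (iii) as the heart of the matter. For (i), I would first note that $|V_{\gout}| = \sum_i \gout(i)$ equals the total number of edges of $G$, so realizing the full source multiset $I = V_{\gout}$ requires as many paths as edges and hence forces every path to consist of a single edge; concretely, vertex $i$ occurring with multiplicity $\gout(i)$ among the sources means that all $\gout(i)$ of its outgoing edges begin paths, so no edge is ever a continuation. This decomposition into singletons always exists and is the only one with this source multiset, giving $S_G(V_{\gout}) = 1$. For (ii), I would reduce to the single step $I' = I \uplus \{i\}$ with $I' \subseteq V_{\gout}$ and induct on $|I' - I|$. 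Starting from a principal decomposition with sources $I$, the hypothesis $I \uplus \{i\} \subseteq V_{\gout}$ means the multiplicity of $i$ in $I$ is strictly below $\gout(i)$, so some outgoing edge at $i$ is not the first edge of its path; cutting that path immediately before this edge produces a new path with source $i$ and raises the source multiset to $I \uplus \{i\}$. Both resulting pieces remain label-increasing, and since they came from one common path their edges already lay in distinct blocks, so the block condition survives, whence $S_G(I') > 0$.

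For (iii), I would classify the principal decompositions of $G$ with sources $I$ by the position of the distinguished edge $e = (i,j) \in B_m$ within the path that contains it. The pivotal observation is that every edge of $B_m$ carries a label larger than all edges of $B_1, \ldots, B_{m-1}$, while the block condition permits at most one $B_m$-edge per path; hence in any path a $B_m$-edge must be the last edge, and $e$ is therefore either the sole edge of its path or its last edge. In the first case, $e$ contributes the source $i$ (so $i \in I$ here) and the sink $j$; deleting $e$ then yields a principal decomposition of $G' = G - e$ with residual source multiset $I' = I - \{i\}$, and appending $e$ as a singleton path inverts this, so this case is enumerated by $S_{G'}(I')$.

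In the second case $e$ is the last edge of its path and is preceded by an edge ending at $i$, so deleting $e$ leaves a principal decomposition of $G'$ with the \emph{same} source multiset $I$ whose affected path now terminates at $i$. To reverse this I must reattach $e$ to some path of a $G'$-decomposition that ends at $i$. Using $V_{\gin}(G') = V_{\gin}(G) - \{j\}$ and $V_{\gout}(G') = V_{\gout}(G) - \{i\}$, the sink multiset of that $G'$-decomposition equals $(J - \{j\}) \uplus \{i\}$, so the number of paths ending at $i$ is exactly $k_i$; but I may only append $e$ to a path carrying no other $B_m$-edge, and since a $B_m$-edge is always the last edge of its path, the forbidden paths ending at $i$ correspond bijectively to the edges of $B_m - e$ ending at $i$, of which there are $r_e$. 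Each $G'$-decomposition with sources $I$ thus admits exactly $k_i - r_e$ valid extensions, so the second case contributes $(k_i - r_e)\, S_{G'}(I)$, and summing the two cases yields \eqref{recmain}.

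I expect the main obstacle to be making the lemma ``a $B_m$-edge is the last edge of its path'' fully rigorous and then turning both cases into genuine bijections, especially the multiset bookkeeping that identifies the sink multiplicity $k_i$ and the count $r_e$ of forbidden reattachments. Along the way I would verify that the coefficient $k_i - r_e$ is non-negative---automatic, since the $r_e$ forbidden paths sit among the $k_i$ paths ending at $i$---and check that the boundary configurations (in particular the scope $i \in I$ under which the singleton case is non-empty) are handled consistently with the convention $I' = I - \{i\}$, so that the two cases account for every decomposition exactly once.
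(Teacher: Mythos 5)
Your proof is correct and takes essentially the same route as the paper's: (i) via the unique all-singleton decomposition, (ii) by splitting paths at the extra source vertices, and (iii) by classifying principal decompositions according to whether $e$ forms a singleton path (contributing $S_{G'}(I')$, possible only when $i \in I$) or is the final edge of a longer path (contributing $(k_i - r_e)\,S_{G'}(I)$, using that a $B_m$-edge must be last in its path). Your write-up is in fact more explicit than the paper's terse proof---in particular the ``last edge'' lemma, the bijective reattachment count, and the flagged boundary convention for $i \notin I$---but the underlying argument is identical.
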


\begin{proof}
The item (i) is clear, it corresponds to one principal $|E|$-decomposition of $G$. 

(ii) If there is a principal decomposition with sources $I$ then by additionally splitting certain paths at vertices $I' - I$ we may get a principal decomposition with sources $I'$. 

(iii) Note that if $S_G(I) > 0$, then $j \in J$.  
If edge $e$ forms a separate path in a principal decomposition of $G$, then we should have $i \in I,$ and the number of such decompositions is $S_{G'} (I').$ In the other cases, $e$ is the last edge of any path and can be joined by the vertex $i$ to decompositions of $G'$ having the same sources $I$ and sinks $(J - \{ j\}) \uplus \{i \}$ (by eliminating $e$ we remove $j$ and add $i$ to sinks). Since we cannot join $e$ after $r_e$ edges of the same block $B_m$, there are $(k_i - r_e)$ ways to join $e$ to every of $S_{G'}(I)$ corresponding decompositions. So, the recurrence follows. 
\end{proof}

\begin{crl}
If $G$ has edges $e_1, \ldots, e_m$ (without blocks), then for $e_m=(i,j)$, $G' = G - e_m,$ $I' = I - \{ i\}$ and $k_i$ the number of repetitions of $i$ in $(J - \{ j\}) \uplus \{i \}$, we have
\begin{align}\label{fgst}
S_{G}(I) &= S_{G'} (I') + k_i S_{G'}(I).
\end{align}
\end{crl}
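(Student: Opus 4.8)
The plan is to obtain the corollary as the special case of Theorem~\ref{sg}(iii) in which every edge forms its own singleton block. Reading ``without blocks'' as the absence of any block constraint, I would regard $G$ with edges $e_1, \ldots, e_m$ as built from the $m$ singleton blocks $B_\ell = \{e_\ell\}$; a singleton block imposes no restriction on which paths its edge may lie in, so this is genuinely the blockless situation, and since the labels increase as $e_1, \ldots, e_m$ they increase with respect to the block order, fulfilling the hypotheses of part~(iii). Taking $e = e_m = (i,j) \in B_m = \{e_m\}$, the quantity $r_e$ in~\eqref{recmain} counts the edges of $B_m - e = \emptyset$ that terminate at $i$, so $r_e = 0$. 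Substituting this into~\eqref{recmain} collapses the relation to $S_G(I) = S_{G'}(I') + k_i\,S_{G'}(I)$, which is precisely~\eqref{fgst}.

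A more self-contained alternative would be to rerun the case analysis of part~(iii) directly, now unencumbered by the block condition. Since $e_m$ carries the largest label, it must be the final edge of whichever path contains it. If $e_m$ is a path on its own, its tail $i$ is a source, whence $i \in I$, and deleting $e_m$ sets up a bijection with the principal decompositions of $G' = G - e_m$ having sources $I' = I - \{i\}$, contributing $S_{G'}(I')$. Otherwise $e_m$ is appended to the end of a longer path; deleting it yields a decomposition of $G'$ with unchanged sources $I$ but with sinks $(J - \{j\}) \uplus \{i\}$, since the sink $j$ is lost and a fresh sink appears at $i$.

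The step I expect to carry all the content is the coefficient $k_i$ in the second case: I must verify that the number of paths in a decomposition of $G'$ that terminate at $i$ equals $k_i$, the multiplicity of $i$ in the sink multiset $(J - \{j\}) \uplus \{i\}$. This follows from the source--sink bookkeeping noted before Theorem~\ref{sg}, whereby the algebraically defined sink multiset $V_{\gin} \uplus I - V_{\gout}$ coincides with the multiset of terminal vertices of the paths, each path contributing its endpoint exactly once; hence the multiplicity $k_i$ records precisely the available paths ending at $i$. Because no block now forbids any attachment, $e_m$ may be glued onto any one of these $k_i$ paths, so each of the $S_{G'}(I)$ decompositions of $G'$ lifts to $k_i$ decompositions of $G$, producing the term $k_i\,S_{G'}(I)$. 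Summing the two manifestly disjoint and exhaustive cases gives~\eqref{fgst}.
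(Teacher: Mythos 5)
Your proposal is correct and matches the paper's own (implicit) derivation: the corollary is stated there as an immediate specialization of Theorem~\ref{sg}(iii), exactly as in your first paragraph, where treating each edge as a singleton block makes the block constraint vacuous and forces $r_e = 0$ in \eqref{recmain}. Your second, self-contained argument simply reruns the paper's proof of part~(iii) in the blockless case, so it adds rigor but no genuinely different route.
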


\begin{crl}
If $G$ has $n = 1$ vertex and $m$ loops $(1,1)$, then $S_{G}(I) = S(m,k)$ if $|I| = k$, where $S(m,k)$ is Stirling number of the second kind. Relation \eqref{fgst} becomes the well-known recurrence
$$
S(m,k) = S(m-1, k-1) + k S(m-1, k).
$$
\end{crl}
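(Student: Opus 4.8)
The plan is to prove the two assertions in turn: first that $S_G(I)=S(m,k)$ whenever $|I|=k$, and then that the recurrence \eqref{fgst} specializes to the classical Stirling recurrence. For the first claim I would start from the observation already recorded in Section~\ref{std}: when $V=\{1\}$ and $G$ carries $m$ labeled loops $(1,1)$, principal decompositions of $G$ are exactly partitions of the edge set $[m]$ into nonempty blocks. Indeed, inside a single path the labels must strictly increase, and conversely any nonempty subset of loops, read in increasing order of labels, is a legitimate increasing path because every loop begins and ends at the unique vertex $1$. Hence a principal $k$-decomposition is the same datum as a set partition of $[m]$ into $k$ blocks, and there are $S(m,k)$ of these.

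It then remains to identify the source multiset. Since each path consists of loops based at vertex $1$, its source is $1$; with $k$ paths we obtain $I=\{1^k\}$, so that $|I|=k$ records precisely the number of blocks. Combining this with the previous paragraph yields $S_G(I)=S(m,k)$.

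For the second claim I would simply feed the loop graph into the formula \eqref{fgst} of the preceding corollary. Here the last edge is $e_m=(i,j)=(1,1)$, so $i=j=1$, the graph $G'=G-e_m$ is the loop graph on $m-1$ edges, and $I'=I-\{1\}$ has $|I'|=k-1$. The one quantity needing care is $k_i$. Because $\gin(1)=\gout(1)=m$, we have $V_{\gin}=V_{\gout}=\{1^m\}$, and therefore $J=V_{\gin}\uplus I-V_{\gout}=\{1^k\}$; consequently $(J-\{j\})\uplus\{i\}=\{1^k\}$, whose number of repetitions of $i=1$ is $k_i=k$. Substituting into \eqref{fgst} turns $S_G(I)=S_{G'}(I')+k_iS_{G'}(I)$ into $S(m,k)=S(m-1,k-1)+kS(m-1,k)$, as claimed.

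The argument is essentially a bookkeeping specialization, so I do not anticipate a genuine obstacle; the only delicate step is the correct evaluation of $k_i$, which hinges on first pinning down the sink multiset $J=\{1^k\}$ via the relation $J=V_{\gin}\uplus I-V_{\gout}$. It would also be worth noting that the boundary behavior is consistent: the empty decomposition of the edgeless graph gives $S(0,0)=1$ through item~(i) of Theorem~\ref{sg}, matching the usual initial conditions of the Stirling recurrence.
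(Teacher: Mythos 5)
Your proposal is correct and follows exactly the route the paper intends: the identification of principal decompositions of the $m$-loop graph with set partitions of $[m]$ (already noted in Section~\ref{std}) gives $S_G(I)=S(m,k)$, and the specialization of \eqref{fgst} with $k_i=k$ yields the classical recurrence. The paper leaves these steps implicit, and your careful evaluation of $J=\{1^k\}$ and hence $k_i=k$ is precisely the bookkeeping that justifies them.
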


\begin{rmk}
The $G$-Stirling function $S_G(I)$ is a graph (path-partition) generalization of Stirling number of the second kind; it counts partitions of (labeled) edge set into (increasing) paths. Note that $S_G$ is different from Stirling (and Bell) numbers for graphs studied in \cite{duncan}, which count partitions of graph vertex set into independent sets. Although, for $n = 1$ (and several blocks) there is a correspondence between these definitions (partitions of edge set into increasing paths vs. partitions of vertex set into independent sets).
\end{rmk}

\subsection{Symmetrization}

The symmetric group acts naturally on decompositions by permuting the indices of edges. For $\sigma \in S_m$ and digraph $G$ with the labeled edge set $E = \{e_1, \ldots, e_m\}$, let $G^{\sigma}$ be the same graph with edges labeled as $\{e_{\sigma(1)}, \ldots, e_{\sigma(m)}\}$. In general, this means that $G^{\sigma}$ will have another set of principal decompositions. 

Define the following characteristic
\begin{equation}\label{eg}
E_G(I) := \sum_{\sigma \in S_m} \sgn(\sigma) S_{G^{\sigma}}(I),
\end{equation}
where $I$ is any multiset on $[n]$. Note that if $|I| = 1,$ then $E_G$ reduces to the sum 
$$
E_{G}(\{i\}) = \sum_{e_{\sigma(1)} \cdots e_{\sigma(m)} \text{ Euler tours } i \to j} {\sgn(\sigma)},
$$
which has nice algebraic application \cite{bollobas, swan, szigeti} (here $j$ is the corresponding sink of an Euler tour). Namely, the following property is used in polynomial identities for matrix algebra: For a directed graph $G = (V, E)$ with $|V| = n$ and $|E| = 2n$ and every $1 \le i\le n$, we have $E_{G}(\{i\}) = 0.$
As we will see in next section, the characteristic $E_G(I)$ shows a similar connection with the Weyl algebra. 

We will also need the formula for computing $E_G(I)$ in terms of {\it shuffles} of paths,  which are defined as follows.

For permutations $\sigma, \tau$ of $\ell, r$ (disjoint) elements define the {\it shuffle set} 
$\text{Sh}(\sigma, \tau)$ as the set of all permutations of $\ell + r$ elements from $\sigma, \tau$ such that the order of elements from each of $\sigma$ and $\tau$ remains the same. For example, 
$$\text{Sh}((1,3), (4,2)) = \{(1,3,4,2), (1,4,3,2), (4,1,3,2), (1,4,2,3), (4,1,2,3), (4,2,1,3) \}.$$
For more than two permutations $\sigma_1, \ldots, \sigma_t$ the set $\text{Sh}(\sigma_1, \ldots, \sigma_t)$ is defined similarly. In other words, $\text{Sh}$ is the set of {\it linear extensions} of a poset that consists of separated chains labeled with respect to the given permutations.
Note that 
$$
|\text{Sh}(\sigma_1, \ldots, \sigma_t)| = \binom{|\sigma_1| + \cdots + |\sigma_t|}{|\sigma_1|, \ldots, |\sigma_t|} = \frac{(|\sigma_1| + \cdots + |\sigma_t|)!}{|\sigma_1|! \cdots |\sigma_t|!}.
$$

Consider now any $k$-decomposition (not necessarily principal) $\mathcal{P} = \{P_1, \ldots, P_k\}$ of $G$ with sources $I$ and sinks $J$; every path $P_i$ here is viewed as a permutation $({\ell_1}, \ldots, \ell_i)$ which presented by the sequence of edges $e_{\ell_1} \cdots e_{\ell_i}$. Define
\begin{align}
E(\mathcal{P}) &:= \sum_{\sigma \in \text{Sh}(P_1, \ldots, P_k)} {\sgn(\sigma)}.
\end{align}

\begin{prp} The following formula holds for $E_G(I)$,
\begin{equation}
E_G(I) = \sum_{\mathcal{P}: I \to J} {E(\mathcal{P})},
\end{equation}
where the sum is taken over all $k$-decompositions with sources $I$ and sinks $J$.
\end{prp}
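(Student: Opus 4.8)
The plan is to expand $E_G(I)$ via its definition, rewrite $S_{G^\sigma}(I)$ as a signed count over decompositions, exchange the two finite sums, and reduce everything to a single sign-preserving bijection applied to each decomposition separately. First I would substitute \eqref{eg} and re-express $S_{G^\sigma}(I)$. The key observation is that a principal decomposition of $G^\sigma$ is nothing but a $k$-decomposition $\mathcal P=\{P_1,\dots,P_k\}$ of $G$ into ordered directed trails with sources $I$ (and hence sinks $J$), subject to the requirement that the relabelling $\sigma$ is \emph{increasing} along the directed order of each $P_i$: "principal in $G^\sigma$" means the $\sigma$-labels increase along each path, and for a fixed $\sigma$ this forces the edges of each path to be listed in their directed-trail order. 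Thus
\[
S_{G^\sigma}(I)=\sum_{\mathcal P: I\to J}\big[\,\sigma\text{ is increasing along each }P_i\,\big],
\]
and plugging this into \eqref{eg} and swapping the (finite) sums gives
\[
E_G(I)=\sum_{\mathcal P: I\to J}\ \sum_{\sigma\in S_m}\sgn(\sigma)\,\big[\,\sigma\text{ increasing along each }P_i\,\big].
\]

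The core step is then to prove, for each fixed $\mathcal P$, the inner identity $\sum_{\sigma}\sgn(\sigma)\,[\,\sigma\text{ increasing along each }P_i\,]=E(\mathcal P)$. Writing $P_i=(\ell^i_1,\dots,\ell^i_{s_i})$ for the label sequence read along the trail, the indicator condition is exactly $\sigma(\ell^i_1)<\dots<\sigma(\ell^i_{s_i})$ for every $i$, i.e.\ $\sigma$ respects the disjoint union of chains $P_1,\dots,P_k$. I would then invoke the bijection $\sigma\mapsto\sigma^{-1}$, reading $\sigma^{-1}$ as the one-line word $\sigma^{-1}(1)\,\sigma^{-1}(2)\cdots\sigma^{-1}(m)$: listing the labels in order of increasing $\sigma$-value, the increasing-on-each-chain condition is precisely the statement that each chain $P_i$ occurs in its internal order, which is the defining property of a shuffle. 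Hence $\sigma\mapsto\sigma^{-1}$ maps $\{\sigma:\ \sigma\text{ increasing on each }P_i\}$ bijectively onto $\mathrm{Sh}(P_1,\dots,P_k)$, and since $\sgn(\sigma)=\sgn(\sigma^{-1})$ the two signed sums coincide, giving $E(\mathcal P)$. Summing over all $\mathcal P$ with sources $I$ and sinks $J$ then yields the proposition.

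The routine parts are the Fubini exchange and the sign bookkeeping $\sgn(\sigma)=\sgn(\sigma^{-1})$. The step I expect to require the most care is the first one: pinning down the correspondence between principal decompositions of $G^\sigma$ and pairs $(\mathcal P,\sigma)$, where $\mathcal P$ is recorded as a collection of \emph{ordered} directed trails rather than as unordered blocks of edges. One must verify this is a genuine bijection, so that a single decomposition is not counted with several internal orderings for the same $\sigma$; the increasing requirement forces a unique admissible order and resolves this. Matching the relabelling convention for $G^\sigma$ (whether $\sigma$ or $\sigma^{-1}$ acts on the edges) with the shuffle sign is the other delicate point, although by the inverse bijection the final identity is insensitive to that choice.
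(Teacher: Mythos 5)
Your proof is correct and follows essentially the same route as the paper's own argument: rewrite $S_{G^{\sigma}}(I)$ as a count of decompositions of $G$ on which $\sigma$ increases along each path, exchange the two finite sums, and identify the inner signed sum with $E(\mathcal{P})$ via the sign-preserving bijection $\sigma \mapsto \sigma^{-1}$ onto $\mathrm{Sh}(P_1,\ldots,P_k)$. If anything, your handling of the relabelling convention and the one-line reading of $\sigma^{-1}$ as a shuffle is spelled out more carefully than in the paper's proof.
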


\begin{proof}
Consider any permutation $\sigma \in S_m$. If we take a principal decomposition of $G^{\sigma}$ 
and apply $\sigma^{-1}$ to it, then we get a decomposition of $G$ with the same set of sources and sinks. Take any decomposition $\mathcal{P} = \{P_1, \ldots, P_k \}$ of $G$ and the set of permutations $\sigma$ for which $\sigma(\mathcal{P})$ becomes principal. Then for any path $P_i = e_{\ell_1} \ldots e_{\ell_s}$, we have $\sigma(\ell_1) < \cdots < \sigma(\ell_s)$. Therefore, for every $\sigma \in \mathrm{Sh}(P_1, \ldots, P_k)$, $\sigma^{-1}$ corresponds to a principal decomposition of $G^{\sigma}$. Note that $\sgn(\sigma) = \sgn(\sigma^{-1})$ and so we obtain
\begin{align*}
E_{G}(I) &= \sum_{\sigma \in S_m} \sgn(\sigma) S_{G^{\sigma}}(I) \\
&= \sum_{\mathcal{P}:I\to J} \sum_{\sigma \in S_m, \sigma(\mathcal{P}) \text{ is principal}} \sgn(\sigma)\\
&=\sum_{\mathcal{P}:I\to J} \sum_{\sigma \in \text{Sh}(P_1, \ldots, P_k)} {\sgn(\sigma)}.
\end{align*}
\end{proof}

\section{Connections with Weyl algebra}\label{norm-weyl}

\subsection{Definitions.} Let $K$ be a field of characteristic $0$. The $n$-th Weyl algebra $A_n$ is an associative algebra over $K$ defined by $2n$ generators $x_1, \ldots, x_n, \dx_1, \ldots, \dx_n$\footnote{$A_n$ is isomorphic to the polynomial algebra with $\dx_i$ considered as partial derivation $d/dx_i$.} and relations
$$
x_i x_j = x_j x_i, ~ \dx_i \dx_j = \dx_j \dx_i \text{, } \dx_i x_j  -  x_j \dx_i= \delta_{i,j} \text{ for } 1\le i,j \le n,
$$
where $\delta_{i,j}$ is the Kronecker symbol. The elements of types 
$$x^{\alpha} \dx^{\beta} := x_1^{\alpha_1} \cdots x_n^{\alpha_n} \dx_1^{\beta_1} \cdots \dx_n^{\beta_n}$$ 
with $\alpha = (\alpha_1, \ldots, \alpha_n), \beta = (\beta_1, \ldots, \beta_n) \in \mathbb{Z}^n_{\ge 0},$ are called {\it monomials}. Define the {\it length} 
$$\ell(x^{\alpha} \dx^{\beta}):= \sum_{i = 1}^n (\alpha_i - \beta_i)$$ and the {\it weight} $$\omega(x^{\alpha} \dx^{\beta}):= (\alpha_1 - \beta_1, \ldots, \alpha_n - \beta_n).$$
In most of the cases below, we will write monomials in the equivalent form 
$$x_{i_1} \ldots x_{i_s} \dx_{j_1} \ldots \dx_{j_p} \text{ for } i_1, \ldots, i_s, j_1, \ldots, j_p \in [n];$$
e.g. this monomial has length $s - p$. All monomials $x^{\alpha} \dx^{\beta}$ form a linear vector space basis of $A_n$. 
When the element $w$ of $A_n$ is expressed as a linear combination 
$$
w = \sum_{\alpha, \beta} c(\alpha, \beta) x^{\alpha} \dx^{\beta}, \quad c(\alpha, \beta) \in K,
$$
we say that $w$ is {\it normally ordered}. The {\it order} of $w$ is defined as 
$$\text{ord}(w) := \max_{c(\alpha, \beta) \ne 0}|\beta|, \quad |\beta| = \sum_{i = 1}^n \beta_i.$$ Note that $\text{ord}(w_1 w_2) = \text{ord}(w_1) + \text{ord}(w_2).$

Define the following subspaces of $A_n$: 
$$
A_{n}^{(p,q)} := \langle x^{\alpha} \dx^{\beta}\ :\ |\alpha| = p, |\beta| = q \rangle,
$$
$$
A_{n}^{(0)} := \bigoplus_{i \ge 1} A_{n}^{(i,i)}, \quad A_{n}^{*(p)} := \bigoplus_{i = 1}^p A_{n}^{(i,i)}.
$$
Note that $A_{n}^{(0)}$ is the subalgebra of $A_n$ formed by the elements of length $0$. 

\subsection{Normal ordering} We show that combinatorial meaning of coefficients in the normal ordering can be interpreted in terms of graph decompositions. Furthermore, we will consider monomials of subspace $A_{n}^{(0)},$ i.e. of length $0$ (otherwise, for our purposes we may add fictive elements). We associate every monomial $w = x_{i_1} \ldots x_{i_p} \dx_{j_1} \ldots \dx_{j_p} \in A_{n}^{(0)}$ with the $p$-block of a graph in the following way:
$$
\text{block}(w) := \{(i_1, j_1), \ldots, (i_p, j_p) \}.
$$ 

\begin{thm}\label{main}
Let $w_1,\ldots, w_m \in A_{n}^{(0)}$ be monomials. Then we have
\begin{equation}\label{wmain}
w_1 \cdots w_m = \sum_{I  \subseteq V_{\gout}} S_G(I) \prod_{i \in I} x_i \prod_{j \in J} \dx_j,
\end{equation}
where digraph $G$ with $n$ vertices is built up from the blocks $\text{block}(w_1), \ldots, \text{block}(w_m)$ (so that the indices of edges increase with respect to the order of blocks) and $J = V_{\gin} \uplus I - V_{\gout}$.
\end{thm}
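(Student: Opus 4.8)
The plan is to prove Theorem~\ref{main} by induction on the total number of edges $|E| = p_1 + \cdots + p_m$, using the recurrence for $S_G$ established in Theorem~\ref{sg}(iii) as the combinatorial engine that mirrors the algebra of the Weyl relations. First I would set up the correspondence carefully: each monomial $w_\ell = x_{i_1}\cdots x_{i_{p_\ell}}\dx_{j_1}\cdots\dx_{j_{p_\ell}}$ becomes the block $\{(i_1,j_1),\ldots,(i_{p_\ell},j_{p_\ell})\}$, and the constraint that edges of one block lie in distinct paths is exactly the constraint that the $x$'s (resp. $\dx$'s) within a single $w_\ell$ already commute among themselves and are \emph{not} reordered against each other by the normal-ordering process. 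The length-$0$ hypothesis guarantees each block has equally many $x$'s and $\dx$'s, so every vertex-balance bookkeeping identity $J = V_{\gin}\uplus I - V_{\gout}$ makes sense and the right-hand side of \eqref{wmain} lands in $A_n^{(0)}$.

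The core of the induction is to peel off the \emph{last} edge, matching the algebraic reduction to the graph reduction. Concretely, I would take $e = (i,j) \in B_m = \text{block}(w_m)$ to be the highest-labeled edge, i.e. the rightmost factor pair $x_i\cdots\dx_j$ of $w_m$, and commute its $\dx_j$ leftward past the accumulated $x$-letters that sit to its left in the product $w_1\cdots w_m$. Each time $\dx_j$ passes an $x_i$ we invoke $\dx_j x_i = x_i \dx_j + \delta_{i,j}$; the ``$x_i\dx_j$'' branch corresponds to $e$ being appended as the last edge of an existing path (the $(k_i - r_e)$ term), while the ``$\delta_{i,j}$'' branch (which contracts the pair, effectively starting $e$ as its own singleton path) corresponds to the $S_{G'}(I')$ term. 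The subtlety I must track is precisely the counting factor: $\dx_j$ may be contracted against any of the $x_i$'s currently available as path-endpoints, but \emph{not} against an $x_i$ belonging to the same block $B_m$ — and this is exactly why the recurrence carries $(k_i - r_e)$ rather than $k_i$. Here $k_i$ counts the occurrences of $i$ among the current sinks $(J - \{j\})\uplus\{i\}$ and $r_e$ excludes the forbidden same-block endpoints.

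Thus the plan is to show that, after applying $w_1\cdots w_m = (w_1\cdots w_{m-1}w_m')\cdot x_i\dx_j$ where $w_m'$ is $w_m$ with its last $x_i\dx_j$ removed (giving the graph $G' = G - e$), the normal ordering of the product splits into exactly two algebraic contributions whose coefficients are $S_{G'}(I')$ and $(k_i - r_e)S_{G'}(I)$. By the inductive hypothesis both $w_1\cdots w_{m-1}w_m'$ expansions are governed by $S_{G'}$, and summing the two contributions reproduces \eqref{recmain}. Matching the $\prod x \prod \dx$ indices on each branch against the source/sink update ($I' = I - \{i\}$ for the contraction branch, $I$ unchanged for the append branch, with sinks shifting from $j$ back to $i$) completes the identification term by term.

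I expect the main obstacle to be the bookkeeping in the append branch: verifying that moving $\dx_j$ leftward through the full word $w_1\cdots w_{m-1}w_m'$ produces precisely $(k_i - r_e)$ valid contractions, with the right-hand factors reassembling into a normally-ordered monomial with sinks $(J-\{j\})\uplus\{i\}$. One must argue that the available $x_i$-endpoints for contraction are in canonical bijection with the occurrences of $i$ counted by $k_i$, minus the same-block ones counted by $r_e$, and that no double-counting or sign issue arises because all the $x_i$ (and all the $\dx_j$) commute freely among themselves. Once this bijection is pinned down, the theorem follows by assembling the pieces and appealing to Theorem~\ref{sg}(iii); the base case $m=1$ (a single already-normally-ordered monomial, giving $S_G(V_{\gout}) = 1$ by Theorem~\ref{sg}(i)) is immediate.
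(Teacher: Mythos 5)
Your plan has the same skeleton as the paper's proof: induct on the total number of edge-pairs, peel off the highest-labeled edge $e=(i,j)$ of $B_m$, and match the two resulting contributions against the recurrence \eqref{recmain} of Theorem~\ref{sg}. However, the algebraic mechanism you describe is not the one that realizes this skeleton, and it fails concretely. The letter $\dx_j$ coming from the last pair of $w_m$ stands at the far right of $w_1\cdots w_m$; since no $x$ lies to its right, it never participates in any contraction during normal ordering --- it simply survives as a rightmost factor. ``Commuting it leftward'' would require the reversed relation $x_a\dx_j=\dx_j x_a-\delta_{a,j}$, which introduces signs and moves away from normal order; worse, a contraction of this $\dx_j$ against an $x_j$ of an earlier monomial would, in graph terms, glue $e$ \emph{before} a lower-labeled edge, which is exactly what principality forbids. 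Also, your starting factorization $w_1\cdots w_m=(w_1\cdots w_{m-1}w_m')\,x_i\dx_j$ is already false whenever $w_m'$ contains a $\dx_i$ (i.e.\ whenever $r_e>0$), since $x_i$ does not commute past those letters. The correct mechanism (the paper's) moves the \emph{other} letter: the $x_i$ of the last pair travels left past $w_1,\ldots,w_{m-1}$, contracting via $\dx_i x_i=x_i\dx_i+1$ with each of the $q$ letters $\dx_i$ occurring there; the $\dx_i$'s inside $w_m$ itself lie to the right of $x_i$ and are automatically excluded --- this is where the $r_e$-exclusion actually comes from, with no need to impose it by hand.

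Second, and this is the conceptual gap: your identification of the two branches with the two terms of \eqref{recmain} is exactly swapped. A contraction annihilates an $x$ and a $\dx$, i.e.\ it glues $e$ onto the end of the path carrying that $\dx_i$'s edge; this is the append branch, contributing $(k_i-r_e)S_{G'}(I)$. The no-contraction branch, in which $x_i$ survives and reaches the front of the word, is the one where $i$ becomes a new source, i.e.\ $e$ forms a path by itself, contributing $S_{G'}(I-\{i\})=S_{G'}(I')$. With the roles interchanged, the term-by-term matching promised in your last step cannot be carried out. Finally, even with the correct mechanism there remains a bookkeeping step your plan does not see: the number of contraction terms is $q$, the number of $\dx_i$-letters in $w_1\cdots w_{m-1}$ (a quantity independent of $I$), not the number $k_i-r_e$ of available path-endpoints; to convert the sum of contraction contributions into $(k_i-r_e)S_{G'}(I)$, the paper replaces the contracted $\dx_i^{(\ell)}$ by a fictive variable $\dx_{n+1}$, so that the inductive hypothesis applies in $A_{n+1}^{(0)}$ and records which path was extended. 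Treating ``available $x_i$-endpoints'' as the objects contracted against, as you do, has no direct algebraic meaning in the Weyl algebra and skips this step entirely.
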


Let us consider examples. 

\noindent{\bf Example 1.} Let $n = 4$ and 
$$w_1 = x_1x_2 \dx_2 \dx_1, w_2 = x_4 \dx_2, w_3 = x_1x_2x_4 \dx_4 \dx_3 \dx_3.$$
We have
$$
w_1 w_2 w_3 = 2 x_1 x_2 x_4^2 \dx_2 \dx_3^2 \dx_4 + x_1 x_2^2 x_4^2 \dx_2^2 \dx_3^2 \dx_4 + 2 x_1^2 x_2 x_4^2 \dx_1 \dx_2 \dx_3^2 \dx_4 + x_1^2 x_2^2 x_4^2 \dx_1 \dx_2^2 \dx_3^2 \dx_4
$$
and according to Theorem \ref{main}, digraph with $n = 4$ vertices is built up from three blocks $B_1 = \{e_1 = (1,2), e_2 = (2,1) \},$ $B_2 = \{e_3 = (4,2) \},$ $B_3 = \{e_4 = (1,4), e_5 = (2,3), e_6 = (4,3) \}.$ So, it is exactly the digraph shown in Fig. 1 (b). Table 1 shows its all principal decompositions and one can easily check that it corresponds to the expression above. 

\

\begin{center}
\begin{tabular}{ccc}
$I$ & $J$ & principal decompositions\\
\hline
\hline
$\{1,2,4,4 \}$ & $\{2,3,3,4 \}$ & $e_1 \cup e_2e_4 \cup e_3e_5 \cup e_6$\\
		& & $e_1 e_5 \cup e_2e_4 \cup e_3 \cup e_6$\\
\hline
$\{1,2,2,4,4 \}$  & $\{2,2,3,3,4 \}$ & $e_1 \cup e_2e_4 \cup e_3 \cup e_5 \cup e_6$ \\
\hline
$\{1,1,2,4,4 \}$ & $\{1,2,3,3,4 \}$ & $e_1 e_5 \cup e_2 \cup e_3 \cup e_4 \cup e_6$\\
& & $e_1 \cup e_2 \cup e_3e_5 \cup e_4 \cup e_6$ \\
\hline
$\{1,1,2,2,4,4 \}$ & $\{1,2,2,3,3,4\}$ & $e_1 \cup e_2 \cup e_3 \cup e_4 \cup e_5 \cup e_6$.
\end{tabular}

\

{\bf Table 1.} All principal decompositions of digraph $G_1'$ shown in Fig. 1 (b).
\end{center}

\

\noindent{\bf Example 2.} Suppose now $n = 3$ and 
$$
w_1 = x_1 \dx_1, w_2 = x_2 \dx_3, w_3 = x_2 \dx_1, w_4 = x_4 \dx_4, w_5  = x_1 \dx_2.
$$
We show how the expression 
$$
w_1 w_2 w_3 w_4 w_5  = 2 x_1 x_2^2 \dx_1 \dx_2 \dx_3 + 2x_1 x_2^2 x_3 \dx_1 \dx_2 \dx_3^2 + x_1^2 x_2^2 \dx_1^2 \dx_2 \dx_3 + x_1^2 x_2^2 x_3 \dx_1^2 \dx_2 \dx_3^2
$$
is related to graphs. According to Theorem \ref{main}, graph $G$ (see Fig. 2) consists of $n = 3$ vertices and edges $\{e_1 = (1,1), e_2 = (2,3), e_3 = (2,1), e_4 = (3,3), e_5 = (1,2) \}$. Table 2 shows all possible sources and sinks $I, J \subseteq \{1,2,3 \}$. Recall that $S_G(I)$ is the number of principal decompositions with sources $I$ and sinks $J$. For instance, we have two possible principal decompositions with 
$$I = \{1,2,2 \}, J = \{1,2,3 \}: e_1e_5 \cup e_2e_4 \cup e_3 \text{ and } e_1 \cup e_2e_4 \cup e_3e_5.$$ 
Therefore, $S_G(\{1,2,2 \}) = 2$, which contributes to the expression above as the summand $2 x_1 x_2^2 \dx_1 \dx_2 \dx_3$. 

\begin{center}
{\begin{tabular}{ccc}
$I$ & $J$ & principal decompositions\\
\hline
\hline
$\{1,2,2 \}$ & $\{1,2,3 \}$ & $e_1e_5\cup e_2e_4\cup e_3$\\
& & $e_1\cup e_2e_4\cup e_3e_5$\\
\hline
$\{ 1,2,2,3\}$ &  $\{1,2,3,3 \}$ & $e_1\cup e_2\cup e_3e_5\cup e_4$\\
& & $e_1e_5\cup e_2\cup e_3\cup e_4$\\
\hline
$\{1,1,2,2 \}$ & $\{1,1,2,3 \}$ & $e_1\cup e_2e_4\cup e_3\cup e_5$\\
\hline
$\{1,1,2,2,3 \}$ & $\{1,1,2,3,3 \}$ & $e_1\cup e_2\cup e_3\cup e_4\cup e_5$\\
\end{tabular}}

\

{\bf Table 2.} All principal decompositions of the graph presented in Fig. 2. 
\end{center}

\begin{center}
\begin{tikzpicture}[x=0.7cm, y=0.7cm,>=latex, thick]
	\vertex (1) at (0,2) [label=left:$1$]{};
	\vertex (2) at (3,2) [label=right:$2$]{};	
	\vertex (3) at (1.5,0) [label=right:$3$]{};
	\draw[->] (1) edge [black, loop above] node[black,above]{$e_1$} (1);
	\draw[->] (2) edge [black] node[black,right]{$e_2$} (3);
	\draw[->] (2) edge[black,bend left=20]  node[black,below]{$e_3$} (1);
	\draw[->] (3) edge [black,loop below] node[black,below]{$e_4$} (3);
	\draw[->] (1) edge[bend left=20]  node[above]{$e_5$} (2);
\end{tikzpicture}

{\bf Fig. 2.}
\end{center}

\begin{proof}[Proof of Theorem \ref{main}]
We proceed by induction on the total order of $w_1, \ldots, w_m$, i.e. on the value
$$\text{ord}(w_1 \cdots w_m) = \text{ord}(w_1) + \cdots + \text{ord}(w_m).$$ The statement is obvious if the total order is $1$, or we have monomial $x_i \dx_j$. To prove the formula for monomials $w_1 \cdots w_m$, let $w_m = x_{i_1} \ldots x_{i_s} \dx_{j_1} \cdots \dx_{j_s}$ and consider the action of $w_1 \cdots w_{m-1}$ on $x_{i_s}$. For simplicity, put $i_s = i, j_s = j$ and $w_m' = x_{i_1} \cdots x_{i_{s-1}} \dx_{j_1} \cdots \dx_{j_{s-1}}$. Let $\{\dx_{i}^{(1)}, \ldots, \dx_{i}^{(q)} \}$ be all $\dx_{i}$'s in monomials $w_1, \ldots, w_{m-1}$. When one of $\dx_i$ acts on $x_i$, we will change this situation to the following equivalent operation: remove $x_i$, then change $\dx_i$ to the fictive element $\dx_{n+1}$, and after the normal ordering process remove $\dx_{n+1}$. Using this operation we obtain that
\begin{align*}
w_1 \cdots w_m = x_i (w_1 \cdots w_{m-1} w_m') \dx_j + \left(\sum_{\ell = 1}^{q} [w_1 \cdots w_{m-1}]_{\dx_i^{(\ell)} \to \dx_{n+1}} w_m' \right)_{\dx_{n+1} \to 1} \dx_j.
\end{align*}
(Here $[w_1 \cdots w_{m-1}]_{\dx_i^{(\ell)} \to \dx_{n+1}}$ means that we change $\dx_i^{(\ell)} \to \dx_{n+1}$ in one of $w_1, \ldots, w_{m-1}$.)
Note that we can apply the induction hypothesis to expressions $[w_1 \cdots w_{m-1}]_{\dx_i^{(\ell)} \to \dx_{n+1}} w_m'$ and $w_1 \cdots w_{m-1} w_m'$. Hence,
\begin{align*}
w_1 \cdots w_m &= x_i \left(\sum_{I} S_{G'}(I) \prod_{\ell \in I}x_{\ell} \prod_{k \in J}\dx_k \right) \dx_j \\
&+ \left(\sum_{\ell = 1}^q  \sum_{I} S_{G'_{\ell}}(I) \prod_{\ell \in I}x_{\ell} \prod_{k \in J}\dx_k \right)_{\dx_{n+1} \to 1} \dx_j,
\end{align*}
where graph with $n$ vertices $G'$ is built up from $\text{block}(w_1), \ldots, \text{block}(w_{m-1}), \text{block}(w_m')$; and graph $G'_{\ell}$ obtained by adding a new vertex $n+1$ and changing the edge $e = (v,i)$ that corresponds $\dx_{i}^{\ell}$ to $e := (v, n + 1).$ Note that $S_{G'_{\ell}}(I) = S_{G'}(I)$ (with sinks $(J - \{n + 1\}) \uplus \{ i \}$). Therefore, we get
\begin{align*}
w_1 \cdots w_m &= \sum_{I} S_{G'}(I - \{ i\}) \prod_{\ell \in I}x_{\ell} \prod_{k \in J}\dx_k \\
&+  \sum_{I} q S_{G'}(I) \prod_{\ell \in I}x_{\ell} \prod_{k \in J}\dx_k,\\
&=\sum_{I} (S_{G'}(I - \{ i\}) + q S_{G'}(I)\ \prod_{\ell \in I}x_{\ell} \prod_{k \in J}\dx_k\\
&=\sum_{I, J} S_{G}(I) \prod_{\ell \in I}x_{\ell} \prod_{k \in J}\dx_k,
\end{align*}
where $G$ is built up from $\text{block}(w_1), \ldots, \text{block}(w_m)$; we have used Theorem \ref{sg} (eq. \eqref{recmain}) for which it is easy to see that $q$ is a number of $i$'s in $(T - \{j\}) \uplus \{i \}$ without counting the last block.
\end{proof}

\begin{rmk}
In fact, the monomial $w = x_{i_1} \ldots x_{i_p} \dx_{j_1} \ldots \dx_{j_p}$ can be associated with any $p$-block of a graph that matches the vertices $i_1, \ldots, i_p$ with $j_1, \ldots, j_p$, e.g. for every permutation $\sigma \in S_p$ we may define
$$
\text{block}(w) = \{(i_1, j_{\sigma(1)}), \ldots, (i_p, j_{\sigma(p)}) \}.
$$
Note that these changes do not affect on the result of Theorem \ref{main}, the right-hand side remains the same.
\end{rmk}

\begin{crl} For a digraph $G = ([n], E)$ with $E = \{e_1, \ldots, e_m \}$, we have
\begin{equation}\label{e2}
\prod_{\ell = 1}^m x_{i_{\ell}} \dx_{j_{\ell}} = \sum_{I} S_G(I) \prod_{i \in I} x_i \prod_{j \in J} \dx_j,
\end{equation}
where $e_{\ell} = (i_{\ell}, j_{\ell})$, the sum runs over all (multi)sets of sources $I$, and $J$ is a set of sinks.
\end{crl}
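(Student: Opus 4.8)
The plan is to derive this corollary directly from Theorem~\ref{main} by specializing each factor to a single ``edge operator''. First I would set $w_\ell := x_{i_\ell}\dx_{j_\ell}$ for $\ell = 1, \ldots, m$. Each such $w_\ell$ has exactly one creation and one annihilation symbol, so its length is $\ell(w_\ell) = 1 - 1 = 0$ and hence $w_\ell \in A_n^{(1,1)} \subset A_n^{(0)}$. Thus the $w_\ell$ satisfy the hypotheses of Theorem~\ref{main}, and the product $w_1 \cdots w_m = \prod_{\ell=1}^m x_{i_\ell}\dx_{j_\ell}$ is exactly the left-hand side of \eqref{e2}.

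Next I would identify the associated digraph. By definition $\text{block}(w_\ell) = \{(i_\ell, j_\ell)\} = \{e_\ell\}$ is a single-edge ($1$-)block, and the digraph assembled from $\text{block}(w_1), \ldots, \text{block}(w_m)$ with edge labels increasing in block order is precisely $G = ([n], E)$ with $E = \{e_1, \ldots, e_m\}$ and $e_\ell = (i_\ell, j_\ell)$. Theorem~\ref{main} then yields directly
$$
w_1 \cdots w_m = \sum_{I \subseteq V_{\gout}} S_G(I) \prod_{i \in I} x_i \prod_{j \in J} \dx_j, \qquad J = V_{\gin} \uplus I - V_{\gout},
$$
which is the right-hand side of \eqref{e2} once we note that the multisets $I$ with $S_G(I) = 0$ contribute nothing, so the sum may be written over all multisets of sources $I$.

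The only point that genuinely needs checking — and which I expect to be the main (and essentially the sole) obstacle — is that the blocked $S_G$ appearing in Theorem~\ref{main} coincides with the ordinary (unblocked) $G$-Stirling function of the corollary. Recall that the block constraint demands that edges belonging to the same block lie in distinct paths of the decomposition. Here every block $\{e_\ell\}$ is a singleton, so no two edges ever share a block and the constraint is vacuous. Consequently a decomposition is principal in the blocked sense if and only if it is principal in the ordinary sense, and the two enumerations of $S_G(I)$ agree. This observation is immediate once one sees that all blocks are singletons, and it completes the identification of the right-hand side of \eqref{e2} with the output of Theorem~\ref{main}.
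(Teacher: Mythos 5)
Your proposal is correct and matches the paper's (implicit) argument exactly: the corollary is stated as an immediate specialization of Theorem~\ref{main} to the monomials $w_\ell = x_{i_\ell}\dx_{j_\ell}$, each contributing a singleton block so that the block constraint is vacuous and the blocked count reduces to the ordinary $G$-Stirling function. Your attention to the two minor bookkeeping points (vacuity of singleton blocks, and dropping multisets $I$ with $S_G(I)=0$ from the sum) is exactly what is needed, and nothing more.
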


\begin{crl}
If $n = 1,$ then \eqref{e2} becomes the classical result
$$
(x \dx)^m = \sum_{i = 0}^m S(m,i) x^i \dx^i,
$$
where $S(m,i)$ is Stirling number of the second kind.
\end{crl}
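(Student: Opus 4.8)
The plan is to specialize the identity \eqref{e2} from the preceding corollary to the case $n=1$; essentially no new combinatorics is needed, only careful bookkeeping of sources and sinks over a single vertex. First I would observe that when $n=1$ every edge of the associated digraph $G$ is a loop $(1,1)$, so each factor on the left of \eqref{e2} is $x_1\dx_1 = x\dx$, and the product collapses to $(x\dx)^m$. Thus the left-hand side already matches the claim.

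Next I would unwind the right-hand side. Since the only vertex is $1$, every multiset of sources has the form $I = \{1^i\}$ for some $0 \le i \le m$, and $I \subseteq V_{\gout} = \{1^m\}$ holds exactly when $i \le m$. For such an $I$ the sink rule $J = V_{\gin} \uplus I - V_{\gout}$ gives $J = \{1^m\} \uplus \{1^i\} - \{1^m\} = \{1^i\}$, so the monomial attached to $I$ is $\prod_{\ell \in I} x_\ell \prod_{k \in J} \dx_k = x^i\dx^i$. Hence the sum in \eqref{e2} reorganizes, indexing by the single parameter $i$, into $\sum_{i=0}^m S_G(\{1^i\})\, x^i\dx^i$.

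Finally I would invoke the corollary of Theorem \ref{sg} treating a digraph with one vertex and $m$ loops, which asserts $S_G(I) = S(m,i)$ whenever $|I| = i$. Substituting this into the reorganized sum yields $(x\dx)^m = \sum_{i=0}^m S(m,i)\, x^i\dx^i$, as claimed (the degenerate term $i=0$ is harmless, since $S(m,0)=0$ for $m\ge 1$ and equals $1$ for $m=0$). I do not expect any genuine obstacle, as the substance is already carried by Theorem \ref{main} and the Stirling-number corollary; the only point requiring attention is the sink computation $J=\{1^i\}$, which guarantees that each source multiset of size $i$ contributes precisely $x^i\dx^i$ rather than an unbalanced power, after which the result follows by direct substitution.
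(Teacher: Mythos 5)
Your proposal is correct and follows exactly the route the paper intends (the paper leaves this corollary without explicit proof, as it is a direct specialization): restrict \eqref{e2} to the one-vertex graph with $m$ loops, compute $J = \{1^i\}$ from the sink rule, and invoke the earlier corollary identifying $S_G(\{1^i\})$ with $S(m,i)$. Your handling of the degenerate $i=0$ term is a nice touch of care, consistent with $S_G(\emptyset)=0$ for $m\ge 1$.
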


\subsection{Skew-symmetric polynomials}\label{skew-weyl}

Consider the skew-symmetric polynomial over noncommuting variables
$$
s_{n}(x_1, \ldots, x_n) := \sum_{\sigma \in S_{n}} {\sgn(\sigma)} x_{\sigma(1)} \cdots x_{\sigma(n)}. 
$$
The famous Amitsur-Levitzki theorem \cite{al} states that
$$
s_{2n}(A_1, \ldots, A_{2n}) = 0
$$
is a minimal polynomial identity for $n \times n$ matrices $A_1, \ldots, A_{2n}$.
This result is also known as an application of Euler tours to algebra \cite{bollobas, swan, szigeti}. Namely, if we have a digraph $G = (V, E)$ with $|V| = n$ and $|E| = 2n,$ then for every $1 \le i,j \le n$
$$
\sum_{e_{\sigma(1)} \cdots e_{\sigma(2n)} \text{ Euler tours } i \to j} {\sgn(\sigma)} = 0.
$$

We will now present a similar connection of graph theory with the Weyl algebra. 

Recall that the subspace $A_n^{(1,1)} \subset A_n$ is generated by monomials $x_i \dx_j$,
$$
A_n^{(1,1)} = \langle x_{i} \dx_{j} \ |\ i, j \in [n] \rangle.
$$
We show the following skew-symmetric analog of Theorem \ref{main}.

\begin{thm} \label{symw}
Let $w_1, \ldots, w_{m} \in A_n^{(1,1)}$ be monomials. Then
$$
s_{m}(w_1, \ldots, w_m) = \sum_{I} E_G(I) \prod_{i \in I} x_i \prod_{j \in J} \dx_j,
$$
where digraph $G$ with $n$ vertices has $m$ edges represented by $w_1, \ldots, w_m$ (i.e. if $w_{\ell} = x_{i_{\ell}} \dx_{j_{\ell}}$, then there is an edge $(i_{\ell}, j_{\ell})$ in $G$).
\end{thm}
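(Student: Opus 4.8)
The plan is to reduce Theorem \ref{symw} directly to Theorem \ref{main} by expanding the skew-symmetric polynomial as a signed sum of ordered products and applying the normal-ordering formula \eqref{wmain} to each product separately. Since each $w_\ell = x_{i_\ell}\dx_{j_\ell}$ lies in $A_n^{(1,1)}$, its associated block $\text{block}(w_\ell) = \{(i_\ell, j_\ell)\} = \{e_\ell\}$ is a single edge, so the block constraints in Theorem \ref{main} are vacuous and $S_G$ is the ordinary $G$-Stirling function. I would first fix a permutation $\sigma \in S_m$ and observe that the product $w_{\sigma(1)} \cdots w_{\sigma(m)}$ is exactly the product of the monomials whose blocks are the single edges $e_{\sigma(1)}, \ldots, e_{\sigma(m)}$ taken in that order. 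By \eqref{wmain} this product normally orders as $\sum_I S_{G_\sigma}(I)\prod_{i\in I}x_i\prod_{j\in J}\dx_j$, where $G_\sigma$ is the graph $G$ with the edge $e_{\sigma(k)}$ given label $k$; that is, $G_\sigma = G^{\sigma^{-1}}$ in the relabeling notation of \eqref{eg}.

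The second step is to check that the output monomial on the right-hand side does not depend on $\sigma$ beyond its index through $I$. Relabeling the edges does not change the multigraph itself, hence $V_{\gout}$ and $V_{\gin}$ --- and therefore every in/out-degree --- are identical for $G$ and all of its relabelings $G^\tau$. Consequently the associated sink multiset $J = V_{\gin}\uplus I - V_{\gout}$ is the same for every $\sigma$, so the monomials $\prod_{i\in I}x_i\prod_{j\in J}\dx_j$ can be collected across all terms. Summing over $\sigma$ with signs and interchanging the finite sums then gives
\begin{align*}
s_m(w_1,\ldots,w_m) &= \sum_{\sigma\in S_m}\sgn(\sigma)\,w_{\sigma(1)}\cdots w_{\sigma(m)}\\
&= \sum_I\Bigl(\sum_{\sigma\in S_m}\sgn(\sigma)\,S_{G^{\sigma^{-1}}}(I)\Bigr)\prod_{i\in I}x_i\prod_{j\in J}\dx_j.
\end{align*}
Finally, using $\sgn(\sigma)=\sgn(\sigma^{-1})$ and reindexing the inner sum by $\tau=\sigma^{-1}$ (a bijection of $S_m$) turns the parenthesized coefficient into $\sum_{\tau}\sgn(\tau)S_{G^\tau}(I) = E_G(I)$ by the definition \eqref{eg}, which is the claim.

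The only genuinely delicate points are bookkeeping rather than conceptual. First, I must pin down the direction of the relabeling: the ordered product assigns position $k$ to the original edge $e_{\sigma(k)}$, so the label of $e_p$ becomes $\sigma^{-1}(p)$, producing $G^{\sigma^{-1}}$ rather than $G^{\sigma}$; this is harmless because the subsequent reindexing runs over the whole group and $\sgn$ is inversion-invariant, but it must be stated correctly. Second, I must confirm that $S_{G^\sigma}(I)$ vanishes whenever $I\not\subseteq V_{\gout}$, so that the unrestricted sum over $I$ in the statement agrees with the restricted sum $I\subseteq V_{\gout}$ coming from \eqref{wmain}; this is immediate since a decomposition cannot have more sources at a vertex than its out-degree. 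With these two observations in place the argument is a direct term-by-term application of Theorem \ref{main} followed by an interchange of summation.
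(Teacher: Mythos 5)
Your proposal is correct and follows essentially the same route as the paper's own proof: apply Theorem \ref{main} to each ordered product $w_{\sigma(1)}\cdots w_{\sigma(m)}$, interchange the sums over $\sigma$ and $I$, and recognize the resulting coefficient as $E_G(I)$ from the definition \eqref{eg}. Your two extra bookkeeping observations (the $\sigma$ versus $\sigma^{-1}$ labeling convention, resolved via $\sgn(\sigma)=\sgn(\sigma^{-1})$, and the fact that $J$ depends only on $I$ and the underlying multigraph) are points the paper passes over silently, and including them only makes the argument more careful, not different.
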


\begin{proof}
From Theorem \ref{main}, 
$$
w_{\sigma(1)} \cdots w_{\sigma(m)} = \sum_{I} S_{G^{\sigma}}(I) \prod_{i \in I} x_i \prod_{j \in J} \dx_j,
$$
where $S_{G^{\sigma}}(I)$ enumerates principal decompositions with respect to the edges permutation $\sigma$. 
Therefore,
\begin{align*}
s_{m}(w_1, \ldots, w_m) &= \sum_{I} \sum_{\sigma \in S_m} \sgn(\sigma) S_{G^{\sigma}}(I, J) \prod_{i \in I} x_i \prod_{j \in J} \dx_j \\
&= \sum_{I} E_G(I) \prod_{i \in I} x_i \prod_{j \in J} \dx_j.
\end{align*}
\end{proof}

\begin{rmk}
Theorem \ref{symw} presents a normal ordering of the skew-symmetric expression. We will see that this form is useful in investigating the skew-symmetric identities. 
\end{rmk}

\subsection{Minimal polynomial identities} We say that $s_m$ is a {\it minimal polynomial identity} on some space $W$ if 
$$s_m(X_1, \ldots, X_m) = 0 \text{ for every } X_1, \ldots, X_m \in W$$ and 
$$s_{m-1}(X_1, \ldots, X_{m-1}) \ne 0 \text{ for some } X_1, \ldots, X_{m-1} \in W.$$

Amitsur-Levitzki theorem gives a hint that the coefficient of any order $1$ term $x_{i}\dx_{j}$ in $s_{2n}(w_1, \ldots, w_{2n})$ is 0 (it sums with a sign for all Euler tours from $i$ to $j$). In next theorem we show that the same is not always true for coefficients at other terms.

\begin{thm}\label{ssym} The following properties hold for $s_m$ on $A_{n}^{(1,1)}$.

\begin{itemize}
\item $s_{2n} = 0$ is a minimal identity on $A_n^{(1,1)}$ for $n = 1, 2, 3$. 
\item $s_{10} = 0$ is a minimal identity on $A_4^{(1,1)}$.
\item For $n > 3,$ $s_{2n}$ is not an identity on $A_n^{(1,1)}$.
\end{itemize}
\end{thm}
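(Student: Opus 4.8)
The plan is to reduce all three statements to the vanishing or non-vanishing of the signed characteristic $E_G(I)$. By Theorem~\ref{symw}, writing $w_\ell=x_{i_\ell}\dx_{j_\ell}$, the normal form of $s_m(w_1,\dots,w_m)$ is $\sum_I E_G(I)\prod_{i\in I}x_i\prod_{j\in J}\dx_j$, and since these monomials are linearly independent, $s_m=0$ on $A_n^{(1,1)}$ if and only if $E_G(I)=0$ for every digraph $G$ on $[n]$ with $m$ edges and every multiset of sources $I$; likewise $s_m$ is not an identity exactly when some $E_G(I)\neq0$. Minimality of $s_{2n}$ (resp.\ $s_{10}$) is then the assertion that, with one fewer edge, some $E_G(I)\neq0$.

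Next I would prove a shuffle-evaluation lemma. Starting from $E_G(I)=\sum_{\mathcal P}E(\mathcal P)$ (the preceding Proposition), the signed shuffle of the chains factors as
$$
E(\mathcal P)=\sgn(P_1\cdots P_k)\binom{m}{n_1,\dots,n_k}_{-1},
$$
where $n_i$ is the number of edges of $P_i$, $\sgn(P_1\cdots P_k)$ is the sign of the concatenated edge-word, and the Gaussian multinomial is evaluated at $q=-1$. The crucial feature is that $\binom{m}{n_1,\dots,n_k}_{-1}$ vanishes unless at most one part $n_i$ is odd; hence for $m=2n$ it is nonzero only when \emph{all} parts are even, and
$$
E_G(I)=\sum_{\lambda\ \mathrm{even}}\binom{2n}{\lambda}_{-1}\sum_{\substack{\mathcal P:\,I\to J\\ \mathrm{shape}(\mathcal P)=\lambda}}\sgn(P_1\cdots P_k).
$$
The single-path shape $\lambda=(2n)$ records Euler tours, whose signed sum is zero by the graph form of Amitsur--Levitzki, so the whole question rests on multi-path even shapes.

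For the identity statements --- $s_{2n}=0$ for $n=1,2,3$ and $s_{10}=0$ on $A_4^{(1,1)}$ --- I would show that for every even shape $\lambda$ the inner signed sum vanishes, via a sign-reversing, shape-preserving involution on decompositions with fixed $I$ and $J$: a multi-path analogue of the trail-reversal involution used for Amitsur--Levitzki, where a sufficiently large excess of edges over vertices forces a repeated vertex at which a segment may be reversed, or two paths swapped, so as to flip $\sgn(P_1\cdots P_k)$. For these parameters the relevant shapes form a finite list, so the argument can be made fully explicit (or checked shape by shape). Minimality for $n=1,2,3$ is then immediate: with $2n-1$ edges the first-order ($|I|=1$) coefficient is exactly the Amitsur--Levitzki Euler-tour sum for $n\times n$ matrices, which is nonzero below the threshold $2n$; for $n=1$ this is simply $s_1=x\dx\neq0$.

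The decisive step is producing nonzero higher-order coefficients --- both for the non-identity of $s_{2n}$ when $n>3$ and for the minimality of $s_{10}$ (i.e.\ $s_9\neq0$ on $A_4^{(1,1)}$, which cannot be detected at first order, the Euler-tour sums vanishing for $9>8$ edges). I would first fix a seed at $n=4$: an explicit $4$-vertex, $8$-edge digraph and a source multiset $I$ with $|I|\ge2$ realizing an even shape (such as $(4,4)$ or $(6,2)$) for which the involution above cannot be defined, so that all contributing decompositions share one sign and the signed sum is a nonzero multiple of $\binom{8}{\lambda}_{-1}$. To cover all $n>3$ I would combine seeds using the multiplicativity
$$
E_{G_1\sqcup G_2}(I_1\uplus I_2)=\pm\binom{m_1+m_2}{m_1}_{-1}\,E_{G_1}(I_1)\,E_{G_2}(I_2),
$$
together with connected two-edge augmentations (adjoining a vertex of in- and out-degree one, contributing a nonzero factor such as $\binom{2n+2}{2}_{-1}=n+1$), checking that each operation preserves non-cancellation. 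The hard part throughout is controlling the global sign $\sgn(P_1\cdots P_k)$ over all decompositions of a fixed even shape: proving that the cancellation is complete for $n\le3$ but necessarily incomplete for $n\ge4$ is precisely the content of the bound $2n<c(n)$, and locating one uncancelled even shape is where the real combinatorial work lies.
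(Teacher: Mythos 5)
Your setup coincides with the paper's: reduce all three bullets to vanishing or non-vanishing of $E_G(I)$ via Theorem \ref{symw}, and evaluate signed shuffle sums as $q=-1$ binomial coefficients. Your multi-chain factorization $E(\mathcal{P})=\sgn(P_1\cdots P_k)\binom{m}{n_1,\ldots,n_k}_{-1}$ is a correct (and clean) generalization of the paper's Lemma \ref{shuf}, which handles two chains, and your observation that only all-even shapes survive when $m=2n$ is exactly what the paper exploits. But at both decisive points the proposal stops where the proof has to begin. For the identity statements ($n\le 3$, and $s_{10}$ at $n=4$) you invoke a ``sign-reversing, shape-preserving involution'' that is never constructed. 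This is not a routine step: in the single-path case such an involution is the entire substance of Swan's graph-theoretic proof of Amitsur--Levitzki, and no multi-path analogue is exhibited or cited. Worse, your stated mechanism --- ``a sufficiently large excess of edges over vertices forces a repeated vertex'' at which one reverses --- is inconsistent with the theorem itself: the excess $|E|-|V|$ equals $3$ for $(n,m)=(3,6)$, where the involution would have to exist, and equals $4$ for $(n,m)=(4,8)$, where by the third bullet it \emph{cannot} exist; so edge excess cannot be the criterion. The paper avoids this issue entirely by finite case analysis ($17$ cases up to symmetry for $n=3$) and computer verification for $n=4$; your parenthetical fallback ``or checked shape by shape'' amounts to the same thing, but no verification is actually carried out in the proposal.

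For the non-identity statement ($n>3$) the gap is equally concrete: you need an explicit graph with $E_G(I)\ne 0$, and you do not produce one --- you say locating an uncancelled even shape ``is where the real combinatorial work lies,'' which is precisely the content of the theorem. The paper does this work: for each $n>3$ it exhibits an explicit doubled-cycle digraph (Fig.~3), enumerates all $2$-decompositions with $I=J=\{1,1\}$, and computes in closed form $E_G(\{1,1\})=-1-(-1)^n+\binom{n}{n/2}>0$ for $n$ even (with an analogous positive expression for $n$ odd), using exactly the $q(\cdot,\cdot)$ evaluations of Lemma \ref{shuf}. Your inductive scheme for propagating a seed is also unsound as stated: the disjoint-union step cannot pass from $n=4$ to $n=5$, since any $1$-vertex, $2$-edge summand has $E\equiv 0$ (this is just $s_2=0$ on $A_1^{(1,1)}$); and the two-edge augmentation claim (adjoining a degree-$(1,1)$ vertex multiplies $E$ by a nonzero constant) is unproven --- the two new edges can be spliced into existing paths in many ways, changing shapes and signs, so non-cancellation is not automatic. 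Finally, minimality of $s_{10}$ requires $s_9\ne 0$ on $A_4^{(1,1)}$, i.e.\ a $9$-edge digraph on $4$ vertices with some $E_G(I)\ne 0$; your $8$-edge seed cannot supply this, because non-vanishing of $s_8$ does not imply non-vanishing of $s_9$ (vanishing of $s_m$ on a subspace propagates upward to $s_{m+1}$, not downward), so that part of the second bullet is left untouched.
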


We first need the following result.

\begin{lm}\label{shuf}
Let 
$$
\mathrm{Sh}(m,n) := \mathrm{Sh}((1,\ldots, m), (m + 1, \ldots, m + n))
$$
and 
$$
q(m,n) := \sum_{\sigma \in \mathrm{Sh}(m,n)} {\sgn(\sigma)}.
$$
Then 
$$
q(m,n) = q(n,m), \quad q(2m-1, 2n-1) = 0,
$$
$$
q(2m,2n) = q(2m+1, 2n) = \binom{m + n}{n}.
$$
\end{lm}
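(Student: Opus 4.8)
The plan is to analyze the signed shuffle sum $q(m,n)$ by interpreting each shuffle $\sigma \in \mathrm{Sh}(m,n)$ as an interleaving of two increasing chains, and to compute its sign in terms of the number of inversions created between the two chains. Concretely, if I fix the chain $1 < \cdots < m$ and the chain $m+1 < \cdots < m+n$, then a shuffle is determined by choosing which of the $m+n$ positions are occupied by the first chain; the sign of the resulting permutation is $(-1)^{\mathrm{inv}}$, where $\mathrm{inv}$ counts the pairs $(a,b)$ with $a$ from the first chain and $b$ from the second chain that appear out of their natural order. So the first step is to set up a clean generating-function or bijective bookkeeping of $\sum_\sigma (-1)^{\mathrm{inv}(\sigma)}$.

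The symmetry $q(m,n) = q(n,m)$ should fall out immediately: reflecting a shuffle (reversing all positions and swapping the roles of the two chains) is a sign-preserving or uniformly sign-twisting involution on $\mathrm{Sh}(m,n) \leftrightarrow \mathrm{Sh}(n,m)$, and one checks the parity factor works out so the two signed sums agree. For the main evaluations I would use a recurrence obtained by conditioning on the last symbol in the shuffle word. If the last symbol is the top of the first chain (element $m$), the remaining word is a shuffle in $\mathrm{Sh}(m-1,n)$ and contributes an explicit sign factor $(-1)^n$ (since $m$ must jump over all $n$ elements of the second chain relative to their interleaving, or rather the sign contribution is governed by how many second-chain elements sit to its right); if the last symbol is the top of the second chain, we get $\mathrm{Sh}(m,n-1)$ with sign factor $(-1)^m$. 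This yields a recurrence of the shape
\begin{equation}
q(m,n) = (-1)^{n} q(m-1,n) + (-1)^{m} q(m,n-1),
\end{equation}
with base cases $q(m,0) = q(0,n) = 1$, from which all four claimed identities follow by induction on $m+n$.

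From this recurrence the parity statements are straightforward: when $m,n$ are both odd the two sign factors are $-1$ and an induction gives the alternating cancellation forcing $q(2m-1,2n-1) = 0$; when the relevant parities make both sign factors $+1$ one recovers an honest Pascal recurrence $q = q(m-1,n) + q(m,n-1)$, which together with the base cases produces the binomial coefficient $\binom{m+n}{n}$. The cases $q(2m,2n)$ and $q(2m+1,2n)$ should be handled together, checking that the extra odd row in the second case does not change the value because the single sign flip it introduces is compensated by a vanishing neighboring term (one of the odd-odd sums that is already zero). The main obstacle I anticipate is pinning down the exact sign factors in the conditioning step: getting the parity of the number of inversions contributed when a chain-maximum is placed in the last position correct requires care, and an off-by-a-parity error there would break the whole induction. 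I would guard against this by first verifying the recurrence and all four formulas by hand on small cases ($q(1,1)=0$, $q(2,2)=\binom{2}{1}=2$, $q(2,1)$, $q(3,2)$) before trusting the general argument.
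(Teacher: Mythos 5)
Your overall strategy---condition on the last letter of the shuffle word to obtain a recurrence, then induct on parities---is exactly the paper's, but the recurrence you wrote down is false, and the error is precisely the ``off-by-a-parity'' trap you yourself flagged. When the last letter is the top of the \emph{second} chain, i.e.\ the element $m+n$, that element is the global maximum sitting in the final position: it forms no inversion with anything, so deleting it leaves a shuffle in $\mathrm{Sh}(m,n-1)$ with the \emph{same} sign, and the correct factor is $+1$, not $(-1)^m$. (A factor $(-1)^m$ would arise if you instead conditioned on the \emph{first} letter being $m+1$, which must jump over all $m$ elements of the first chain; your recurrence mixes the two conditioning schemes.) The correct recurrence, which is the one the paper derives, is
$$
q(m,n) = q(m,n-1) + (-1)^{n}\, q(m-1,n),
$$
with the sign only on the term where the chain-maximum $m$ is removed from the last position, since the $n$ larger elements all precede it. Your version $q(m,n) = (-1)^{n} q(m-1,n) + (-1)^{m} q(m,n-1)$ is refuted by the smallest case: it forces $q(1,1) = -q(0,1) - q(1,0) = -2$, whereas $\mathrm{Sh}(1,1)=\{12,21\}$ gives $q(1,1) = 1 - 1 = 0$. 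So the sanity check you proposed at the end would indeed have caught this, but as written the recurrence---and hence the whole induction built on it, including the claimed odd-odd vanishing and the Pascal-type evaluation---does not stand.

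Once the sign is corrected, the rest of your plan goes through and essentially coincides with the paper's proof: using the recurrence together with the symmetry $q(m,n)=q(n,m)$ one gets
$q(2m,2n) = q(2m,2n-1) + q(2m-1,2n) = \binom{m+n-1}{n-1} + \binom{m+n-1}{n} = \binom{m+n}{n}$,
then
$q(2m+1,2n) = q(2m+1,2n-1) + q(2m,2n) = q(2m,2n)$ (the first term being an odd-odd value, hence $0$), and finally
$q(2m+1,2n+1) = q(2m+1,2n) - q(2m,2n+1) = \binom{m+n}{n} - \binom{m+n}{m} = 0$.
Note that the genuinely \emph{asymmetric} shape of the correct recurrence (sign on one term only) is what makes these three computations work; a symmetric-looking recurrence like yours cannot reproduce them.
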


\begin{proof}
By the definition, it is obvious that $q(m,n) = q(n,m)$. Let us compute the recurrence for $q(m,n)$. If the last element of permutation is $m + n,$ then we have the sum $q(m,n-1)$. Otherwise, the last element is $m$ which gives $(-1)^n q(m-1,n)$. Hence we have
$$
q(m,n) = q(m,n-1) + (-1)^n q(m-1,n).
$$ 
(In fact, $q(m,n)$ is a $q$-binomial coefficient at $q = -1$.)
So, the needed formulas can easily be derived by induction, since we have
\begin{align*}
q(2m+1,2n+1) &= q(2m+1,2n) - q(2m,2n+1) = \binom{m + n}{n} - \binom{m + n}{m} = 0,
\end{align*}
\begin{align*}
q(2m,2n) &= q(2m,2n-1) + q(2m-1,2n) = \binom{m + n - 1}{n-1} + \binom{m-1+n}{n} = \binom{m + n}{n},
\end{align*}
$$
q(2m+1,2n) = q(2m+1,2n-1) + q(2m,2n) = q(2m,2n).
$$
\end{proof}

\begin{proof}[Proof of Theorem \ref{ssym}]

First note that $s_{m}(w_1, \ldots, w_m) = 0$ if some of $w_1, \ldots, w_m$ are equal. 

1) $s_2 = 0$ is identity for $n = 1$. It is obvious that $s_2(x\dx, x\dx) = (x \dx)^2 - (x\dx)^2 = 0.$ 

$s_4 = 0$ is identity for $n = 2.$ Here we may consider only the case with four operators $x_1 \dx_1, x_2 \dx_2, x_1 \dx_2, x_2 \dx_1$. It can easily be checked that $s_{4}(x_1 \dx_1, x_2 \dx_2, x_1 \dx_2, x_2 \dx_1) = 0$. 

$s_6 = 0$ is identity for $n = 3$.  
There are 17 such cases up to symmetry; and all can easily be verified.

2) $s_{10} = 0$ is identity for $n = 4$. This is verified from our computer calculations for all the possible cases (with reductions up to symmetry). 

3) To prove that $s_{2n}$ is not identity for $n > 3$, we show that $2$-Euler paths of graphs $G$ defined in Fig. 3 $E_G(\{1,1 \})$ does not sum to $0$. The latter means from Proposition \ref{symw} that the coefficient of $x_1^2 \dx_1^2$ in $s_{2n}$ is nonzero.

Suppose $n$ is even. We look for all cases of decompositions of $G$ (see Fig. 3, left) with $I = \{1,1 \}, J = \{1,1 \}$. 
For every vertex $i$ ($2 \le i \le n$) consider the paths $e_1\cdots e_{i-1} e_{2n-i+2} \cdots e_{2n}$ and $e_{n+1}\cdots e_{2n-i+1} e_{i} \cdots e_{n}$. 
These permutations will sum to 
$(-1)^{n-i+1} |\text{Sh}(2(i-1), 2(n-i+1))|$, which by Lemma \ref{shuf} gives $q(2(i-1), 2(n-i+1)) = (-1)^{n-i+1} \binom{n}{i-1}$. There are two more paths $e_1 \cdots e_n$ and $e_{n+1} \cdots e_{2n}$, for which we have $q(n,n) = \binom{n}{n/2}$. Therefore,
\begin{align*}
E_G(\{1,1 \}) &= \sum_{i = 2}^n (-1)^{n-i+1} \binom{n}{i-1} + \binom{n}{n/2}\\
&= -1 - (-1)^n + \binom{n}{n/2} > -1 - (-1)^n + 2 \ge 0.
\end{align*}

\begin{center}
\begin{tikzpicture}[x=0.7cm,y=0.7cm,>=latex, thick]

\def \n {10}
\def \radius {3*0.7cm}
\def \margin {2} 

\vertex[fill] (1) at ({360/\n * (1 + 2)}:\radius) [label = above:$1$]{};
\vertex (2) at ({360/\n * (10 + 2)}:\radius) [label = above:$2$]{};
\vertex (3) at ({360/\n * (9 + 2)}:\radius) {};

\vertex (n) at ({360/\n * (2 + 2)}:\radius) [label = left:$n$]{};
\vertex (n1) at ({360/\n * (3 + 2)}:\radius) {};

\draw[->] (1) edge[bend left = 20]  node[above] {$e_1$} (2);
\draw[->] (2) edge[bend left = 20]  node[below] {$e_{2n}$} (1);
\draw[->] (2) edge[bend left = 20]  node[above] {$e_2$} (3);
\draw[->] (3) edge[bend left = 20]  node[below] {$e_{2n-1}$} (2);

\draw (n1) edge[dashed, bend right = 100]  node[above] {} (3);

\draw[->] (n1) edge[bend left = 20]  node[left] {$e_{n-1}$} (n);
\draw[->] (n) edge[bend left = 20]  node[right] {$e_{n+2}$} (n1);
\draw[->] (n) edge[bend left = 20]  node[left] {$e_n$} (1);
\draw[->] (1) edge[bend left = 20]  node[right] {$e_{n+1}$} (n);
\end{tikzpicture}
\begin{tikzpicture}[x=0.7cm,y=0.7cm,>=latex, thick]

\def \n {10}
\def \radius {3*0.7cm}
\def \margin {2} 

\vertex[fill] (1) at ({360/\n * (1 + 2)}:\radius) [label = above:$1$]{};
\vertex (2) at ({360/\n * (10 + 2)}:\radius) [label = above left:$2$]{};
\vertex (3) at ({360/\n * (9 + 2)}:\radius) {};

\vertex (q) at ({360/\n * (10 + 2)}:\radius + 2*0.7cm) [label = above:$n$]{};

\draw[->] (2) edge[bend left = 20]  node[above left] {$e_{2n-1}$} (q);
\draw[->] (q) edge[bend left = 20]  node[right] {$e_{2n}$} (2);

\vertex (n) at ({360/\n * (2 + 2)}:\radius) [label = left:$n-1$]{};
\vertex (n1) at ({360/\n * (3 + 2)}:\radius) {};

\draw[->] (1) edge[bend left = 20]  node[above] {$e_1$} (2);
\draw[->] (2) edge[bend left = 20]  node[below] {$e_{2n-2}$} (1);
\draw[->] (2) edge[bend left = 20]  node[above] {$e_2$} (3);
\draw[->] (3) edge[bend left = 20]  node[below] {$e_{2n-3}$} (2);

\draw (n1) edge[dashed, bend right = 100]  node[above] {} (3);

\draw[->] (n1) edge[bend left = 20]  node[left] {$e_{n-2}$} (n);
\draw[->] (n) edge[bend left = 20]  node[right] {$e_{n+1}$} (n1);
\draw[->] (n) edge[bend left = 20]  node[left] {$e_{n-1}$} (1);
\draw[->] (1) edge[bend left = 20]  node[right] {$e_{n}$} (n);
\end{tikzpicture}

{\bf Fig. 3.} Graphs $G$ with $E_G(\{1,1 \}) \ne 0$ for $n$ even (left) and odd (right)
\end{center}

If now $n$ is odd, then we consider graph $G$ as in Fig. 3 (right). We again look for all decompositions with $I = \{1,1 \}, J = \{1,1 \}$. For every vertex $i$($3 \le i \le n-1$) we have the following two possibilities of paths: 
$$P_1 = e_n e_{n+1} \ldots e_{2n-1-i} e_i e_{i+1} \ldots e_{n-1}; P_2 = e_1 \ldots e_{i-1} e_{2n-i} \ldots e_{2n-3} e_{2n-1} e_{2n} e_{2n-2}$$
and 
$$P_1 = e_n e_{n+1} \ldots e_{2n-1-i} e_i e_{i+1} \ldots e_{n-1}; P_2 = e_1 e_{2n-1} e_{2n}  e_2 \ldots e_{i-1} e_{2n-i} \ldots e_{2n-2}.$$
For both cases we get the sum of $(-1)^{n-i} |\text{Sh}(2(n-i), 2i)|$, which is $(-1)^{n-i}\binom{n}{i}$. The remaining four cases of paths decompositions are
$$P_1 = e_1 e_{2n-2}; P_2 = e_n \ldots e_{2n-3} e_{2n-1} e_{2n} e_2 \ldots e_{n-1},$$
with sum of $-|\text{Sh}(2,2n-2)| = -\binom{n}{1}$;
$$P_1 = e_1 e_{2n-2}; P_2 = e_n \ldots e_{2n-3} e_{2n-1} e_{2n} e_2 \ldots e_{n-1},$$
with sum of $-|\text{Sh}(4,2n-4)| = -\binom{n}{2}$;
$$P_1 = e_1 \ldots e_{n-1}; P_2 = e_n \ldots e_{2n-3} e_{2n-1} e_{2n} e_{2n-2},$$
with sum of $|\text{Sh}(n-1, n+1)| = \binom{n}{(n-1)/2}$;
$$P_1 = e_1 e_{2n-1} e_{2n} e_2 \ldots e_{n-1}; P_2 = e_n \ldots e_{2n-2},$$
with sum of $|\text{Sh}(n+1, n-1)| = \binom{n}{(n-1)/2}$.
So, we obtain
\begin{align*}
E_{G}(\{1,1 \}) &= \left(2 \sum_{i = 3}^{n-1} (-1)^{n-i}\binom{n}{i}\right) - \binom{n}{1} - \binom{n}{2} + 2\binom{n}{(n-1)/2} \\
&= 2\left((1-1)^n - (-1)^n -(-1)^{n-1}\binom{n}{1} - (-1)^{n-2}\binom{n}{2} - 1 \right) \\
&- \binom{n}{1} - \binom{n}{2} + 2\binom{n}{(n-1)/2}\\
&=\binom{n}{2} - 3\binom{n}{1} + 2\binom{n}{(n-1)/2} \ge \binom{n}{2} - 3\binom{n}{1} + 2 \binom{n}{2} > 0.
\end{align*}
(Here $n \ge 5$.)
\end{proof}

\begin{rmk}
Reducing the non-identity case to computing $E_G(I)$ for some sources $I$ gives a more efficient way to analyze the sum instead of looking at the whole $s_m$. Computing $E_G(I)$ for all sources $I$ is apparently faster than computing $s_m$ directly (which at least is evident in smaller cases computations).   
\end{rmk}

\begin{rmk}
From our computations, most likely that $s_{12}$ is a minimal identity on $A_{5}^{(1,1)}$.
In fact, one can reduce the number of cases in computations by proving the following equivalent properties:
\begin{itemize}
\item[(A)] Suppose there are monomials $X_1, \ldots, X_m \in A_n^{(1,1)}$ such that $s_m(X_1, \ldots, X_m) \ne 0$. Then there are monomials $X'_1, \ldots, X'_m \in A_n^{(1,1)}$ such that $s_m(X'_1, \ldots, X'_m) \ne 0$ and with total weight 0, i.e.
$$
\omega(X'_1) + \cdots + \omega(X'_m) = (0, \ldots, 0).
$$
\item[(B)] If $E_{G}(I) \ne 0$ for some multisets $I$ and digraph $G$, then there is a {\it balanced digraph} $G'$ (i.e. $\gin(v) = \gout(v)$ for each vertex $v$) with the same number of vertices and edges, such that $E_{G'}(I') \ne 0$ for some multiset $I'$.
\end{itemize}
\end{rmk}

\subsection{$N$-commutators} $s_N$ is called {\it $N$-commutator} on $A_n^{(p,p)}$ if $s_N(X_1, \ldots, X_N) \in A_n^{(p,p)}$ for every $X_1, \ldots, X_N \in A_n^{(p,p)}$. If $s_N(X_1, \ldots, X_N) \ne 0$ for some $X_1, \ldots, X_N \in A_n^{(p,p)}$, $N$-commutator is nontrivial. 

It is known that the space of differential operators of first order 
$A_{n}^{(-,1)} = \langle u \dx_i\ |\ u \in K[x_1, \ldots, x_n] \rangle$ has a nontrivial $N$-commutator for $N=n^2+2n-2$ \cite{dzhum1} and a space of differential operators with one variable ($n = 1$) of order $p$  admits a nontrivial $N$-commutator for $N=2p$ \cite{dzhum}, i.e. there is a nontrivial $2p$-commutator on the subspace $\langle u \dx^{p} : u \in K[x] \rangle$. In all these cases, $s_{N+1}=0$ is an identity. One can expect that this is a general situation: if $s_m=0$ is a minimal identity then in the pre-identity case $s_{m-1}$ gives a nontrivial $N$-commutator for $N=m-1.$  
In next theorem we show that this is not true for the subspace $A_n^{(1,1)}$.

\begin{thm}
Let $s_N$ be a nontrivial $N$-commutator on $A_n^{(1,1)}$. Then $N = 2.$
\end{thm}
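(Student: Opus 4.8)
The plan is to read off the normal ordering from Theorem~\ref{symw} and split it by order. Writing $w_\ell = x_{i_\ell}\dx_{j_\ell}$ and letting $G$ be the associated $N$-edge digraph on $[n]$, Theorem~\ref{symw} gives $s_N(w_1,\dots,w_N)=\sum_I E_G(I)\prod_{i\in I}x_i\prod_{j\in J}\dx_j$, and the summand indexed by $I$ lies in $A_n^{(|I|,|I|)}$. Since $A_n=\bigoplus_{p,q}A_n^{(p,q)}$ and $s_N$ is multilinear, $s_N$ is an $N$-commutator on $A_n^{(1,1)}$ if and only if $E_G(I)=0$ for every $N$-edge digraph $G$ on $[n]$ and every multiset $I$ with $|I|\ge 2$. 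The key first step is to identify the order-one part: the coefficient of $x_i\dx_j$ is $E_G(\{i\})=\sum_{\text{Euler tours }i\to j}\sgn$, which is exactly the $(i,j)$ entry of $s_N(E_{i_1j_1},\dots,E_{i_Nj_N})$ computed in the matrix algebra $M_n(K)$ under $x_i\dx_j\leftrightarrow E_{ij}$ (a product $E_{a_1b_1}\cdots E_{a_Nb_N}$ is nonzero precisely when the edges form a trail). Thus the order-one part of $s_N$ on $A_n^{(1,1)}$ \emph{is} the standard polynomial on $M_n(K)$. A nontrivial commutator must have nonzero image inside $A_n^{(1,1)}$, i.e. nonzero order-one part, so $s_N$ is not an identity of $M_n(K)$; by Amitsur--Levitzki this forces $N\le 2n-1$. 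Since $N=2$ yields the Lie bracket $[X,Y]\in A_n^{(1,1)}$ (a genuine nontrivial commutator), it remains to rule out $3\le N\le 2n-1$.

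For these $N$ I would exhibit an $N$-edge digraph on $[n]$ with $E_G(I)\neq0$ for some $|I|\ge 2$, so that $s_N$ is not a commutator at all. The engine is a product formula for vertex-disjoint unions: if $G=G_1\sqcup G_2$ with $G_i$ on disjoint vertex sets and $|E(G_1)|=a$, $|E(G_2)|=N-a$, then every $2$-decomposition with one source in each component is an Euler tour of $G_1$ together with an Euler tour of $G_2$, and the shuffle formula of Section~\ref{std} together with Lemma~\ref{shuf} give
\[
E_{G_1\sqcup G_2}(\{i_1,i_2\})=q(a,N-a)\,E_{G_1}(\{i_1\})\,E_{G_2}(\{i_2\}),\qquad i_1\in V(G_1),\ i_2\in V(G_2).
\]
Here one chooses the split $a+(N-a)=N$ so that $q(a,N-a)\ne0$, which by Lemma~\ref{shuf} only requires that $a$ and $N-a$ are not both odd, and chooses $G_1,G_2$ with nonzero order-one, i.e. with nonzero signed Euler-tour count.

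Feasibility matches the range almost exactly. By the converse direction of Amitsur--Levitzki, for $e\le 2v-1$ the polynomial $s_e$ is not an identity of $M_v(K)$, so some digraph on $v$ vertices with $e$ edges has nonzero order-one; hence vertex-disjoint pieces of total size $n$ realize any $N\le 2n-2$. The remaining top value $N=2n-1$ (and the smallest cases $n=2,3$, where disjoint pieces cannot yet be packed) must instead be handled by a single connected digraph on $n$ vertices with $2n-1$ edges that decomposes into two paths with nonzero signed count; the natural candidates are the necklace-type graphs of Fig.~3 with one edge deleted. \textbf{The main obstacle is precisely this dense regime.} One must produce explicit graphs covering $n<N\le 2n-1$ and show that the resulting signed shuffle sum---an alternating sum of binomial coefficients $q(\cdot,\cdot)$ of the kind evaluated in the proof of Theorem~\ref{ssym}---does not vanish; the disjoint-union bound $2n-2$ is genuinely short of $2n-1$, so no purely multiplicative argument suffices there. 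Once this non-vanishing is established, $s_N$ fails to be a commutator for every $3\le N\le 2n-1$; combined with the reduction above (nontriviality forces $N\le 2n-1$, while $N=1$ is merely the identity operation), the only nontrivial $N$-commutator on $A_n^{(1,1)}$ is the bracket, $N=2$.
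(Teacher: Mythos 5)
Your reduction and your disjoint-union engine are both correct, and the first half of your argument coincides with the paper's: by Theorem~\ref{symw} and multilinearity, $s_N$ is a commutator iff $E_G(I)=0$ for every $N$-edge digraph on $[n]$ and every $|I|\ge 2$; the order-one part is the standard polynomial on $M_n(K)$ via $x_i\dx_j\leftrightarrow E_{ij}$, so nontriviality forces $N\le 2n-1$ (the paper disposes of $N\ge 2n$ in exactly this way); and your product formula does hold, since cross-inversions in a shuffle of two label-blocks depend only on the shuffle pattern. But the proof is incomplete precisely where you admit it is, and the uncovered regime is larger than you think. Your packing claim has a parity error: to realize $N=2n-2$ with vertex-disjoint pieces you need $a+b=2n-2$ with $a\le 2v_1-1$, $b\le 2v_2-1$, $v_1+v_2\le n$; since $a+b\le 2(v_1+v_2)-2\le 2n-2$, equality forces $a=2v_1-1$ and $b=2v_2-1$, both odd, whence $q(a,b)=0$ by Lemma~\ref{shuf}. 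So disjoint unions reach only $N\le 2n-3$ (and fail for small $n$: nothing for $n=2$, only $N=3$ for $n=3$), leaving $N=2n-2$, $N=2n-1$, and the small cases unproven. Since these are exactly the values where the theorem says something beyond Amitsur--Levitzki, the proposal as it stands does not prove the statement.

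The paper closes this gap with a single uniform construction that makes the multiplicative machinery unnecessary: take $X_1,\dots,X_N$ to be the first $N$ operators of the list $x_1\dx_1, x_1\dx_2, x_2\dx_2, x_2\dx_3,\dots,x_n\dx_n$, i.e. the connected graph with edges $(1,1),(1,2),(2,2),(2,3),\dots$. This graph is so rigid that the relevant sources admit only one or a few $2$-decompositions, so the signed shuffle sum has no room to cancel. For $N=2r-1$ the unique $2$-decomposition with sources $\{1,1\}$ is the loop at vertex $1$ together with the increasing path through all remaining edges, so $E_G(\{1,1\})=q(1,2r-2)=1\ne 0$; for $N=2r$ one takes sources $\{1,2\}$, where again only a handful of decompositions exist and the total, a short sum of $q$-values from Lemma~\ref{shuf}, is nonzero for $r\ge 2$. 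This covers every $2<N<2n$ simultaneously, sparse and dense alike, including $2n-2$ and $2n-1$ and the cases $n=2,3$. That is the idea your proposal is missing: rather than engineering non-vanishing through a product formula (which parity caps at $2n-3$), choose a graph whose $2$-decompositions with prescribed sources are essentially unique.
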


\begin{proof}
Suppose $N > 2$. If $N \ge 2n$, then $\text{ord}(s_N) \ge 2$ since coefficients at terms $x_i \dx_j$ that are $E_G(i)$ vanish from the Amitsur-Levitzki theorem. This means that $s_N \not\in A_n^{(1,1)}$. 

For the other cases, we adopt the graph-theoretic version of example used in proof of Amitsur-Levitzki theorem (that $s_{2n-1}$ is nonzero).

If $N < 2n,$ let us choose the first $X_1, \ldots, X_N$ operators from the set (of $2n-1$)
$$
x_1 \dx_1, x_1 \dx_2, x_2\dx_2, \ldots, x_{n-1} \dx_{n}, x_{n} \dx_n.
$$
The latter represents the graph $G$ with edges $(1,1), (1,2), (2,2), \ldots, (n-1,n), (n,n)$. Consider two cases.

{\it Case 1.} If $N = 2r - 1,$ then the coefficient at term $x_1 x_r \dx_1 \dx_r$ in $s_N(X_1, \ldots, X_N)$ is $E_G(\{ 1,1\})$ (sinks are $\{1,r \}$). There is only one 2-decomposition with such sources and sinks: the paths are $(1\to 1)$ and $(1 \to 2 \to 2 \to \cdots \to r-1 \to r \to r)$. Hence, $E_G(\{ 1,1\}) = |\text{Sh}(1, 2r-2)| = q(1,2r-2) = 1 > 0$ and $s_N \not\in A_{n}^{(1,1)}$.

{\it Case 2.} If $N = 2r$, then consider the term $x_1 x_2 \dx_2 \dx_r$ and its coefficient in $s_N$, which is $E_G(\{ 1,2\})$ (sinks are $\{ 2,r\}$). The possible 2-decompositions here are 

(1) $(1 \to 2)$ and $(2 \to 2 \to \cdots \to r-1 \to r-1 \to r)$ and

(2) $(1 \to 2 \to 2)$ and $(2 \to 3 \to \cdots \to r-1 \to r-1 \to r)$.

Therefore, $E_G(\{ 1,2\}) = q(1, 2r-1) + q(2,2r-2) = r - 1 > 0$ and $s_N \not\in A_{n}^{(1,1)}$.
\end{proof}

\section{Open questions}\label{open}

We propose several problems concerning the minimal identities in Weyl algebra. 

\

\noindent{\bf Problem 1.} What is $c = c(n)$ ($n > 3$) for which $s_{c} = 0$ is a minimal polynomial identity on $A_n^{(1,1)}$? We have shown that $2n < c \le n^2$. 

Using graph-theoretic interpretation, question becomes the following. What is relation between $|E|$ and $|V|$ such that digraph $G = (V, E)$ has $E_{G}(I) = 0$ for all sources $I$? This formulation implies from our graph-theoretic interpretation. For instance, in the classical Amitsur-Levitzki theorem we have $E_{G}(\{ i\}) = 0$ for all $i \in V$ if $|E| \ge 2|V|$.

Consider a more general setting. Recall that $A_n^{(p,p)} \subset {A}_n$ is the subspace of Weyl algebra generated as follows
$$
A_n^{(p,p)} := \langle x_{i_1} \cdots x_{j_p} \dx_{j_1} \cdots \dx_{j_p}\ |\ i_1, \ldots, i_p, j_1, \ldots, j_p \in [n] \rangle.
$$
What is $c(p,n)$ such that $s_{c(p,n)} = 0$ is a minimal identity on $A_n^{(p,p)}$? 

\

\noindent{\bf Problem 2.} Let $A_n^{*(p)} := \displaystyle\bigoplus_{i = 1}^p A_n^{(i,i)}$. What is a minimal identity on $A_n^{*(p)}$? For instance, $s_{2}$ is identity on $A_1^{*(p)}$, since
$$
x^{\ell_1} \dx^{\ell_1}  x^{\ell_2} \dx^{\ell_2} = \sum_{i \ge 0} i!\binom{\ell_1}{i} \binom{\ell_2}{i} x^{\ell_1 + \ell_2 - i} \dx^{\ell_1 + \ell_2 - i}
$$
and so
$
s_2(x^{\ell_1} \dx^{\ell_1}, x^{\ell_2} \dx^{\ell_2}) = 0.
$

\end{document}